\newtheorem{lemma}{Lemma}[section]
\newtheorem{theorem}{Theorem}[section]
\newtheorem{corollary}{Corollary}[section]
\newtheorem{proposition}{Proposition}[section]
\newtheorem{example}{Example}[section]
\newtheorem{definition}{Definition}[section]
\newtheorem{remark}{Remark}[section]
\newcommand{\perm}{P}
\newcommand{\LL}{L}
\newcommand{\invA}{B}
\newcommand{\mass}{A_M}
\newcommand{\KK}{K}
\newcommand{\KKother}{\widetilde{K}}
\newcommand{\MM}{M}
\newcommand{\MMother}{\widetilde{M}}
\newcommand{\bbeta}{\beta'}
\newcommand{\ww}{z'}
\newcommand{\lap}{A_S}
\newcommand{\qq}{\widetilde{q}}
\newcommand{\base}{2}
\newcommand{\Binf}{B^{(\infty)}}
\newcommand{\Ainf}{A^{(\infty)}}
\newcommand{\stiffnessQ}{R}
\newcommand{\shift}{\mathfrak{s}}
\DeclareMathOperator{\rank}{rank}
\title{Tensor rank bounds and explicit QTT representations for the inverses of circulant matrices}
\author[1,2]{Lev Vysotsky}
\author[1]{Maxim Rakhuba}
\affil[1]{HSE University, Pokrovsky Boulevard 11, Moscow, 109028 Russia}
\affil[2]{Marchuk Institute of Numerical Mathematics of the Russian Academy of Sciences, 119333 Moscow, Russia}
\date{}
\begin{document}

\maketitle
\begin{abstract}
    In this paper, we are concerned with the inversion of circulant matrices and their quantized tensor-train~(QTT) structure.
    In particular, we show that the inverse of a complex circulant matrix $A$, generated by the first column of the form $(a_0,\dots,a_{m-1},0,\dots,0,a_{-n},\dots, a_{-1})^\top$ admits a QTT representation with the QTT ranks bounded by $(m+n)$. 
    Under certain assumptions on the entries of $A$, we also derive an explicit QTT representation of $A^{-1}$. The latter can be used, for instance, to overcome stability issues arising when numerically solving differential equations with periodic boundary conditions in the QTT format.
\end{abstract}

\section{Introduction}	

Tensor-train (TT) decomposition~\cite{osel-tt-2011} is a nonlinear representation of multidimensional arrays (tensors) that in many cases leads to significant compression ratios while maintaining high approximation accuracy.
Notably, TT can also be applied to low-dimensional data. 
For example, a vector (one-dimensional array) from $\mathbb{C}^{2^\LL}$ can be reshaped into an element of $\mathbb{C}^{2 \times \dots \times 2}$ ($\LL$-dimensional tensor) and then TT decomposition becomes applicable.
This idea was proposed in~\cite{osel-2d2d-2010,khor-qtt-2011} and is known under the name quantized TT (QTT) decomposition.
QTT decomposition has proven useful in various applications and in particular, for approximating functions and solving partial differential equations (PDEs)~\cite{khoromskij2018tensor}.

QTT decomposition can also be applied to linear operators in the form of matrices.
This is essential for constructing solvers for linear systems when the right-hand side is given in the QTT format and the goal is to approximate the solution also in the QTT format with the desired accuracy. 
In this paper, we are interested in studying the QTT representation of the inverse of a band circulant matrix: 
\begin{equation}\label{eq:A}
	A=
	\begin{bmatrix}
a_0     & a_{-1} & \dots  & a_{-n} & 0      & \dots  & 0       & a_{m-1} & \dots  & a_1     \\
a_1     & \ddots &        &        & \ddots &        &         &         & \ddots & \vdots  \\
\vdots  &        & \ddots &        &        & \ddots &         &         &        & a_{m-1} \\
a_{m-1} &        &        & \ddots &        &        & \ddots  &         &        & 0       \\
0       & \ddots &        &        & \ddots &        &         & \ddots  &        & \vdots  \\
\vdots  &        & \ddots &        &        & \ddots &         &         & \ddots & 0       \\
0       &        &        & \ddots &        &        & \ddots  &         &        & a_{-n}  \\
a_{-n}  &        &        &        & \ddots &        &         & \ddots  &        & \vdots  \\
\vdots  & \ddots &        &        &        & \ddots &         &         & \ddots & a_{-1}  \\
a_{-1}  & \dots  & a_{-n} & 0      & \dots  & 0      & a_{m-1} & \dots   & a_1    & a_0    
	\end{bmatrix}\in\mathbb{C}^{N\times N},
	\end{equation}
	which we will also denote as $A = \mathsf{circ}(a_0,\dots, a_{m-1}, 0,\dots, 0, a_{-n},\dots, a_{-1})$.
We are concerned with obtaining accurate QTT rank bounds for $A^{-1}$ and its explicit QTT representation when $N = 2^\LL$.
We emphasize the fact that the considered QTT ranks of matrices are \emph{not related} to the standard matrix rank and, hence, small QTT ranks do not imply that the matrix under consideration is singular.
The QTT rank bounds of $A^{-1}$ can be useful, e.g., for obtaining rank bounds for the solution of linear systems with $A$, while the explicit QTT representation of $A^{-1}$ can be used for constructing efficient solvers.

To formally introduce the QTT decomposition of a matrix, let us first introduce QTT decomposition of a vector $x = \{x_i\}_{i=0}^{2^\LL-1}\in\mathbb{C}^{2^\LL}$. 
Let us represent $x$ as a multidimensional array $X = \{X_{i_1\dots i_\LL}\}_{i_1,\dots, i_\LL=0}^{1,\dots,1}\in\mathbb{C}^{2\times \dots \times 2}$ by the following bijection between an integer $i=0,\dots,2^\LL-1$, and $\LL$ binary indices $(i_1,\dots,i_\LL)$: 
\[
    i =\overline{i_\LL \dots i_1} \equiv \sum_{k = 1}^\LL 2^{\LL - k} i_k,
\]
which is similar to a binary representation of $i$.
Then we apply the TT decomposition to $X$:
\[
    x_{\overline{i_\LL \dots i_1}} \equiv X_{i_1\dots i_\LL} = \sum_{\alpha_1,\dots,\alpha_{\LL-1}=1}^{r_1,\dots,r_{\LL-1}} G_{i_1 \alpha_1}^{(1)} G_{\alpha_1 i_2 \alpha_2}^{(2)} \dots G_{\alpha_{\LL-2} i_{\LL-1} \alpha_{\LL-1}}^{(\LL-1)} G_{\alpha_{\LL-1} i_\LL}^{(\LL)},
\]
where the minimal values of $r_1,\dots,r_{\LL-1}$ are called TT-ranks.
Storing the so-called \emph{core tensors}~$G^{(k)}$, $k=1,\dots,\LL$ requires $\mathcal{O}(\LL r^2)$ bytes (here $r=\max_k r_k$). 
We note that the total storage depends linearly on $\LL$ (assuming $r$ is independent of $\LL$) and, hence, logarithmically on the vector size $N = 2^\LL$. For function-related vectors, one can obtain bounds on the rank $r$, see~\cite{khoromskij2018tensor} and the reference therein. 

Similarly, we introduce QTT decomposition of a matrix $B = \{B_{i,j}\}_{i,j=0}^{2^\LL-1} \in\mathbb{C}^{2^\LL\times 2^\LL}$, by binarizing two indices: $i =\overline{i_\LL \dots i_1}$, $j =\overline{j_\LL \dots j_1}$ and merging $i_k,j_k$ into pairs: 
\begin{equation}\label{eq:tt_mat}
    B_{\overline{i_\LL \dots i_1}, \overline{j_\LL \dots j_1}} = 
    \sum_{\alpha_1,\dots,\alpha_{\LL-1}=1}^{r_1,\dots,r_{\LL-1}}
    G_{i_1 j_1 \alpha_1}^{(1)} G_{\alpha_1 i_2j_2 \alpha_2}^{(2)} \dots G_{\alpha_{\LL-2} i_{\LL-1}j_{\LL-1} \alpha_{\LL-1}}^{(\LL-1)} G_{\alpha_{\LL-1} i_\LL j_\LL}^{(\LL)}.
\end{equation}
As an example, one can consider an identity matrix $I\in\mathbb{C}^{2^\LL\times 2^\LL}$, whose elements can be expressed in terms of the Kronecker delta $\delta_{\alpha \beta}$ as follows:
\[
    I_{\overline{i_\LL \dots i_1}, \overline{j_\LL \dots j_1}} = \delta_{i_1 j_1} \delta_{i_2 j_2} \dots \delta_{i_\LL j_\LL},
\]
i.e., without any summation. Hence, the QTT ranks of the identity matrix are all equal to $1$, even though the matrix is of full rank.

Asymptotically, representation~\eqref{eq:tt_mat} leads to the same number of bytes in the core tensors $G^{(k)}$, $k=1,\dots,\LL$ as for the QTT representation of a vector: $\mathcal{O}(\LL r^2)$.
Fortunately, matrices arising after discretization of PDEs are also often of low rank~\cite{khkaz-lap-2012}.
Having access to both matrices and vectors in the QTT format, one can construct efficient algorithms for solving, for example, linear systems that avoid forming full matrices and vectors (see, e.g.~\cite{ds-amen-2014}).

To derive the QTT rank bounds of $B=A^{-1}$, where $A$ is as in~\eqref{eq:A}, we show that the elements of its first column $b$ have the form (Section~\ref{sec:circ}):
\begin{equation}\label{eq:first_col}
     b_{i} = \sum_{k=1}^s P_{k} (i) z_k^i,
\end{equation}
where $z_k$ are the roots of $g(z)=0$ and $h(z)=0$:
\begin{equation}\label{eq:g-and-h}
    g(z) = \sum_{k=-n}^{m-1}a_{k}z^{k+n}, \quad h(z) = \sum_{k=-n}^{m-1}a_{k}z^{m-k-1},
\end{equation}
located inside $U=\{z:|z|<1\}$, and where $P_k(i)$ is a certain polynomial of $i$ with the degree less than the multiplicity of $z_k$. 
We note that in~\cite{fuyong2011inverse}, the same formulas were obtained, but for the roots with all multiplicities equal to $1$ and in, e.g.,~\cite{searle1979inverting} multiplicities greater than $1$ were considered, but only for the case $m=2$, $n=1$. 
To overcome these limitations, we have generalized the result to the case of arbitrary multiplicities.
We impose only one restriction on $A$ that is fundamental to the proposed approach: the polynomials $g(z)$ and $h(z)$ from~\eqref{eq:g-and-h} must not have roots with absolute value 1, as it happens for singular matrices (but not only for them).

The structure of~\eqref{eq:first_col} is utilized to estimate the QTT ranks of $B = A^{-1}$, which appear (Section~\ref{sec:qtt_ranks}) to be bounded by $(m+n)$.
As an alternative, one may derive the QTT representation of $b$ and apply the result from~\cite{khkaz-conv-2013} to generate a QTT representation of a circulant matrix from its first column in the QTT format.
Nevertheless, we note that such an approach leads to overestimated QTT rank values of $B$.
The developed techniques are  applied to several examples of circulant matrices (Section~\ref{sec:examples}), including the case of pseudoinverses.

In the case of simple roots $z_k$, we additionally derive explicit formulas for the QTT representation of~$B$ (Section~\ref{sec:qtt_repr}). 
Finally, we test the stability of our formulas numerically (Section~\ref{sec:num_exp}) on the example of a one-dimensional convection-reaction-diffusion boundary value problem with periodic boundary conditions.
The numerical results suggest that we can apply the proposed explicit formulas for large values of $L$ without any stability issues.
This is by contrast to naively applying TT solvers for linear systems directly to the matrix $A$, explicitly assembled in the QTT format.

\emph{Related work.} For the QTT approximation of function-related vectors, we mention~\cite{khor-qtt-2011,dk-qtt-tucker-2013,gras-tenz-2010,vysotsky2021tt}. 
The techniques for deriving explicit QTT representations of QTT matrices were developed in~\cite{khkaz-lap-2012} and applied to specific matrices, arsing in discretization of the Laplacian operator on a uniform grid.
In~\cite{khkaz-lap-2012}, there were also provided the inverses of these matrices in special cases of Neumann and Dirichlet boundary conditions.
In the case of a Fourier matrix, no low-rank QTT representation exists, but the matrix-vector product can still be approximated efficiently in the QTT format~\cite{dks-ttfft-2012}.
In~\cite{khkaz-conv-2013}, the QTT rank bounds and explicit formulas were derived for multilevel Toeplitz and circulant matrices.
The rank bounds for band Toeplitz matrices were obtained in~\cite{otz-teninv-2011}. 

In~\cite{cor-eccomas-2016,kazeev2018quantized,cor-robqtt-2016pre}, it was observed that the straightforward application of TT optimization-based solvers to linear systems arising from PDEs with matrices in the QTT format, leads to severe numerical instabilities.
This problem was formalized in~\cite{bachmayr2018stability} and originates from both ill conditioning of discretized differential operators and the ill conditioning of the tensor representations themselves.
To overcome these issues, an explicit QTT representation of BPX-preconditioned systems was proposed in the same work, which was later used for multiscale and singularly-perturbed problems in~\cite{kazeev2020quantized,marcati2020low}.
In~\cite{rakhuba2021robust}, a robust and efficient solver based on the alternating direction implicit method (ADI) and explicit inversion formulas for tridiagonal Toeplitz matrices was developed.
This solver was applied to three-dimensional Schroedinger-type eigenvalue problems~\cite{marcati2019tensor}.

To the best of our knowledge, no QTT rank bounds or explicit QTT formulas were derived either for inverses of general band circulant matrices or their special cases, such as one-dimensional Laplacian discretization with the periodic boundary conditions.

\section{Circulant matrix inverse} \label{sec:circ}

In this section, we derive formulas for the inverse of a band circulant matrix, without imposing a QTT structure.
The main results of this section are Theorem~\ref{thm:general-inverse} and Corollary~\ref{cor:simple-roots}.
This section is mostly based on~\cite{fuyong2011inverse}, but we also take into account multiplicities of polynomial roots.

	Let us consider a nondegenerate circulant matrix $A \in \mathbb{C}^{N\times N}$ of the form~\eqref{eq:A} with the additional assumption that
	\[
	m \ge 1,~n \ge 0,~a_{m-1} \neq 0,~a_{-n} \neq 0.
	\]
	Let $B \in \mathbb{C}^{N \times N}$ denote the inverse of $A$: $B \equiv A^{-1}$.
	It is well-known~\cite{tyrtyshnikov1997brief} that the inverse of a circulant matrix is also a circulant.
	For $j = 0, \dots, N-1$, let $b_{j}$ be the $j$-th element of the first column of $B$, i.e., $B_{j, 0}$.
	Using definition of the inverse, we may write for all $k,\ell \in \{0,\dots,N-1\}$:
	\[
	\sum_{j=0}^{N-1} A_{k, j}B_{j,\ell}
	=
	\delta_{k,\ell}
	\equiv
	\begin{cases}
		1, &k=\ell, \\
		0, &\text{otherwise}.
	\end{cases}
	\]
	For circulants $A$ and $B$ this system of equations is equivalent to 
	\[
	\sum_{j=0}^{N-1} A_{(k-j) \bmod N, 0}\, B_{(j-\ell)\bmod N, 0}
	=
	\delta_{k,\ell},~k,\ell \in \{0,\dots, N-1\}
	\]
	or 
	\begin{equation}\label{eq:inverse}
	    \sum_{j=0}^{N-1} A_{(k-j) \bmod N, 0}\,b_{(j-\ell)\bmod N}
	=
	\delta_{k,\ell},
	~k,\ell \in \{0,\dots,N-1\}.
	\end{equation}

	Next, we consider a biinfinite Toeplitz matrix $\Ainf$ with the elements
	\[
	\Ainf_{i,j}
	=
	\begin{cases}
	    a_{i-j},~&\text{if } -n \le i-j \le m-1,\\
	    0,~&\text{otherwise}.
	\end{cases}
	\]
	In other words,
	\[
	\Ainf = \begin{bmatrix}
	\ddots & \ddots &        & \ddots  & \ddots  &        &        \\
	\ddots & a_0    & a_{-1} & \dots   &  a_{-n} &    0   & \dots  \\
	       & a_1    & \ddots & \ddots  &         & \ddots & \ddots \\
	\ddots & \vdots & \ddots &         &         &        &        \\
	\ddots & a_{m-1}&        &         &         &        &        \\
	       & 0      & \ddots &         &         &        &        \\
	       & \vdots & \ddots &         &         &        &        
	\end{bmatrix}.
	\]
	Consider the equation
	\begin{equation}\label{eq:infinite}
	\Ainf \xi = \beta,
	\end{equation}
	where $\xi$ and $\beta$ are biinfinite vectors with the elements
	$\xi_{j} = b_{j \bmod N}$ and 
	\[
	\beta_{j} = \begin{cases}
	    1,~\text{if}~j \bmod N = 0, \\
	    0,~\text{otherwise}.
	\end{cases}
	\]
	The notation $\Ainf\xi$ implies biinfinite matrix-by-vector multiplication:
	\[
	\beta_{k} = \sum_{j=-\infty}^{\infty}\Ainf_{k,j}\xi_{j},~~ k \in \mathbb{Z}.
	\]
	Note that these series are not truly infinite, as there are no more than $m+n$ nonzero elements in each row of $\Ainf$.
	Thus, each of these series is convergent.
	We can also rewrite equation~\eqref{eq:infinite} in a more verbose and, possibly, comprehensible form:
	\[
	\Ainf
	\begin{bmatrix}
	\vdots \\
	b_0 \\
	b_1 \\
	\vdots \\
	b_{N-1} \\
	b_0 \\
	\vdots
	\end{bmatrix}
	=
	\begin{bmatrix}
	\vdots \\
	1 \\
	0 \\
	\vdots \\
	0 \\
	1 \\
	\vdots
	\end{bmatrix}.
	\]
	
	\begin{lemma}\label{lm:equivalent}
	Equations~\eqref{eq:inverse} and~\eqref{eq:infinite}, considered as equations for $b_0,\dots,b_{N-1}$ are equivalent.
	\end{lemma}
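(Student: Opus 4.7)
The plan is to show both implications by a careful index manipulation that turns the modular indices of~\eqref{eq:inverse} into the bi-infinite indices of~\eqref{eq:infinite} and vice versa. Throughout, one uses only that the vector $\xi$ in~\eqref{eq:infinite} is $N$-periodic and that the nonzero band of $\Ainf$ is contained in the strip $-n\le i-j\le m-1$.

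First, I would reduce the $N^2$ equations in~\eqref{eq:inverse} to the $N$ equations obtained by fixing $\ell=0$. This follows from a shift of summation index $j\mapsto (j+\ell)\bmod N$: since both $A_{\cdot,0}$ (and hence the kernel $(k-j)\bmod N$) and $b_{\cdot}$ are $N$-periodic, the system~\eqref{eq:inverse} for general $(k,\ell)$ reduces to the $\ell=0$ system evaluated at $k'=(k-\ell)\bmod N$. This is just the standard fact that $AB=I$ for two circulants is equivalent to equating the first columns of $AB$ and $I$.

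Next, I would show the remaining system
\[
    \sum_{j=0}^{N-1} A_{(k-j)\bmod N,\,0}\,b_{j\bmod N}=\delta_{k,0},\qquad k\in\{0,\dots,N-1\},
\]
is the same as~\eqref{eq:infinite}. In one direction, substitute $i=(k-j)\bmod N$ and use that $A_{i,0}$ is supported only on $\{0,\dots,m-1\}\cup\{N-n,\dots,N-1\}$, with $A_{i,0}=a_i$ on the first set and $A_{i,0}=a_{i-N}$ on the second. A change of variable $i\mapsto i-N$ in the second sum then collapses the expression to $\sum_{i=-n}^{m-1}a_i\,b_{(k-i)\bmod N}=\delta_{k,0}$, which is precisely the $k$-th row of~\eqref{eq:infinite} (note that the $N$-periodicity of $\xi$ means that the equation at any row $k\in\mathbb{Z}$ coincides with the equation at $k\bmod N$, and $\beta_k=\delta_{k\bmod N,0}$). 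The reverse direction reads the same chain of identifications right to left.

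The main obstacle is purely bookkeeping: one must check that the index substitution $i=(k-j)\bmod N$ is a bijection between the supports of the two sums, and that no cancellation or overlap occurs. This uses the band assumption $N\ge m+n$ implicit in~\eqref{eq:A}, which guarantees that the sets $\{0,\dots,m-1\}$ and $\{N-n,\dots,N-1\}$ are disjoint, so that the identification of the finite row of $A$ with the truncation of the band of $\Ainf$ is unambiguous. Once this is verified, the equivalence drops out with no further analysis.
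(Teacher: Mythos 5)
Your proof is correct and follows essentially the same route as the paper's: reduce the $N^2$ equations of~\eqref{eq:inverse} to the $\ell=0$ column by an index shift and $N$-periodicity, then match the resulting $N$ banded equations with the rows of~\eqref{eq:infinite} via the band structure of $\Ainf$ and the periodicity of $\xi$ (the paper merely packages your support bookkeeping into a small auxiliary lemma on sums of $N$-periodic functions over length-$N$ windows). No gap; the implicit use of the shift-invariance of $\Ainf$ when identifying row $k$ with row $k\bmod N$ is exactly the step the paper spells out with the substitution $k'=k+Nq$.
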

	\begin{proof}
	    See the proof in Appendix~\ref{app:circ}.
	\end{proof}
	
	We will denote by $U$ the unit circle on the complex plane, i.e. $U = \{z\in\mathbb{C}: |z| = 1\}$.
	Let us consider the Laurent polynomial $f(z)$:
	\begin{equation}\label{eq:laurent}
	f(z) \equiv a_{-n}z^{-n} + \dots + a_{-1}z^{-1} + a_0 + a_1 z + \dots +a_{m-1}z^{m-1}.
	\end{equation}
	Let us additionally assume that $f(z)$ does not have roots on $U$ (i.e. $f(z) \neq 0$ for all $z \in U$).
	Note that this implies the same property for Laurent polynomial $f(z^{-1})$, as if $f(z_-^{-1}) = 0$ for some $z_- \in U$, then $f(z_+) = 0$ for $z_+ = z^{-1}_- \in U$.
	Now consider biinfinite matrix $\Binf$ with the elements:
	\begin{equation}\label{eq:B}
	\Binf_{j,\ell} = \frac{1}{2\pi i} \oint_U \frac{z^{\ell - j-1}dz}{f(z)}.
	\end{equation}
	\begin{lemma} \label{lm:unnamed}
	Matrix $\Binf$ is the right inverse of $\Ainf$:
	\[
	\Ainf \Binf = I^{(\infty)},
	\]
	where $I^{(\infty)}$ is a biinfinite identity matrix: $I^{(\infty)}_{k,\ell} = \delta_{k,\ell}$.
	\end{lemma}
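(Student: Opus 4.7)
The plan is to compute $(\Ainf \Binf)_{k,\ell}$ directly and recognize the integrand, after simplification, as $z^{\ell-k-1}/1$, whose contour integral around $U$ gives the Kronecker delta. The argument is essentially a bookkeeping exercise combined with the fact that integration of integer powers of $z$ over the unit circle isolates the $z^{-1}$ term.

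First, I would unfold the definition. For any $k,\ell \in \mathbb{Z}$,
\[
(\Ainf \Binf)_{k,\ell} = \sum_{j=-\infty}^{\infty} \Ainf_{k,j}\, \Binf_{j,\ell} = \sum_{j=k-m+1}^{k+n} a_{k-j}\, \Binf_{j,\ell},
\]
the sum being genuinely finite because $\Ainf_{k,j}$ vanishes outside $-n \le k-j \le m-1$. This finiteness is what makes the whole computation safe; there are no convergence issues. I would then substitute the contour-integral definition of $\Binf_{j,\ell}$ and, since only finitely many terms are summed, pull the sum inside the integral:
\[
(\Ainf \Binf)_{k,\ell} = \frac{1}{2\pi i}\oint_U \frac{z^{\ell-k-1}}{f(z)} \sum_{j=k-m+1}^{k+n} a_{k-j}\, z^{k-j}\, dz.
\]

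Next, I would reindex the inner sum by $i = k-j$, so $i$ runs from $-n$ to $m-1$, yielding $\sum_{i=-n}^{m-1} a_i z^i = f(z)$ by the definition~\eqref{eq:laurent} of the Laurent polynomial. This cancels the $f(z)$ in the denominator, leaving
\[
(\Ainf \Binf)_{k,\ell} = \frac{1}{2\pi i}\oint_U z^{\ell-k-1}\, dz.
\]
The integrand is now holomorphic on a neighborhood of $U$ except possibly at $z = 0$, so I would finish by the standard evaluation $\tfrac{1}{2\pi i}\oint_U z^{p}\, dz = \delta_{p,-1}$ for $p \in \mathbb{Z}$, which gives $\delta_{\ell,k}$, as required.

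No step presents a real obstacle, since the hypothesis that $f$ has no zeros on $U$ guarantees that the integrand in~\eqref{eq:B} is well-defined on the contour, and the finiteness of the band ensures the matrix product is defined term-by-term. The only thing worth being careful about is the index range and direction of the reindexing $i=k-j$, making sure the endpoints are exactly $-n$ and $m-1$ so that the resulting Laurent polynomial is literally $f(z)$ and not a shifted version of it.
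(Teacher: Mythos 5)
Your proof is correct and follows essentially the same route as the paper's: expand the (finite) matrix product, insert the contour-integral definition of $\Binf$, swap the finite sum with the integral to reconstruct $f(z)$ in the numerator, cancel, and evaluate $\frac{1}{2\pi i}\oint_U z^{\ell-k-1}\,dz = \delta_{k,\ell}$. (Incidentally, your version is cleaner than the appendix, which writes $g(z)$ where $f(z)$ is meant.)
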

	\begin{proof}
	    See the proof in Appendix~\ref{app:circ}.
	\end{proof}
	Now we can prove the main result of this section.
	\begin{theorem}\label{thm:general-inverse}
	Let $m$ and $n$ be nonnegative integers such that $m \ge 2$ and $A \in \mathbb{C}^{N\times N}$ be the circulant matrix of the form~\eqref{eq:A}.
	Denote by $g(z)$ and $h(z)$ the polynomials
	\[
	g(z) \equiv \sum_{k=-n}^{m-1}a_{k}z^{k+n},~~h(z) \equiv \sum_{k=-n}^{m-1}a_{k}z^{m-k-1}.
	\]
	Assume that $g(z)$ does not have roots on the unit cirle $U$.
	Denote $z_1, \dots, z_s$ the roots of $g(z)$ located inside $U$ and $p_1, \dots, p_k$ their respective orders.
	Similarly, denote $w_1, \dots, w_t$ the roots of $h(z)$ located inside $U$ and $q_1, \dots, q_t$ their respective orders.
	
	Under these conditions $A$ is invertible and its inverse is the circulant matrix $B\in\mathbb{C}^{N\times N}$ with the elements $B_{j,\ell} = b_{(j-\ell)\bmod N}:$
	\[
	b_j
	=
	\sum_{k=1}^s\sum_{p'=0}^{p_k-1}
	c_{g,k,p'}
	(-j+n-1+N)^{\underline{p}'} z_k^{-j+n-1+N-p'}
	+
	\sum_{k=1}^t\sum_{q'=0}^{q_k-1}
	c_{h,k,q'}
	(j+m-2)^{\underline{q}'}w_k^{j+m-2-q'},
	\]
	where
	\begin{align*}
    c_{g,k,p'}
	    &=
    \sum_{p=p'}^{p_k-1}\frac{1}{(p_k - 1)!}\binom{p_k-1}{p}\binom{p}{p'}
	\left(\frac{1}{g_k(z)}\right)^{(p_k-1-p)}_{\big|{z=z_k}}
	\left(\frac{1}{1-z^{N}}\right)^{(p-p')}_{\big|{z=z_k}},
	&
	g_k(z) = \prod_{m\neq k} (z-z_m)^{p_m},
	\\
	c_{h,k,q'}
	    &=
	\sum_{q=q'}^{q_k-1}
	\frac{1}{(q_k - 1)!}\binom{q_k-1}{q}\binom{q}{q'}
	\left(\frac{1}{h_k(z)}\right)^{(q_k-1-q)}_{\big|{z=w_k}}
	\left(\frac{1}{1-z^{N}}\right)^{(q-q')}_{\big|{z=w_k}},
	&
	h_k(z) = \prod_{m\neq k} (z-w_m)^{p_w},
	\end{align*}
	where $\left.(f(z))^{(p)}\right|_{z=w}$ denotes the $p$-th derivative of $f(z)$ at $z=w$
	and
	$M^{\underline{r}}$ denotes the falling factorial:
	\[
	M^{\underline{r}} = 
	\begin{cases}
	1, & \text{if } r = 0, \\
	M(M-1)\dots(M-r+1), & \text{otherwise}.
	\end{cases}
	\]
	\end{theorem}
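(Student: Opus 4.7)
The plan is to reduce the problem to a single meromorphic function and apply the residue theorem. Lemmas~\ref{lm:equivalent} and~\ref{lm:unnamed} reduce the defining system~\eqref{eq:inverse} to the biinfinite equation $\Ainf \xi = \beta$ and supply a right inverse $\Binf$ of $\Ainf$ via the contour integral~\eqref{eq:B}. Formally, $\xi = \Binf \beta$ gives $b_j = \sum_{\ell \in \mathbb{Z}} \Binf_{j, \ell N}$. Since this biinfinite sum is not literally convergent on $|z|=1$, I would use instead the equivalent and standard DFT expression $b_j = \frac{1}{N}\sum_{k=0}^{N-1} \zeta_k^{-j}/f(\zeta_k)$, where $\zeta_k = e^{2\pi i k/N}$ and the eigenvalues $f(\zeta_k)$ are nonzero thanks to the assumption that $g$ has no roots on $U$ (which via $f(z)=g(z)/z^n$ gives the same for $f$).

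The key step is to introduce the rational function
\[
F(z) = \frac{z^{N-1-j+n}}{g(z)(1-z^N)},
\]
whose finite poles are precisely the $N$-th roots of unity $\zeta_k$ and the roots of $g(z)$ (which, since $g(0)=a_{-n}\neq 0$ and $g$ has no roots on $U$, are nonzero and off the unit circle). A direct computation at $\zeta_k$ gives $\text{Res}_{z=\zeta_k} F = -\zeta_k^{-j}/(N f(\zeta_k))$, so the residues at the roots of unity sum to $-b_j$. A degree count shows that $F(z) = O(z^{-(m+j)}) = O(1/z^2)$ at infinity (using $m\ge 2$, $j\ge 0$), so the integral of $F$ along $|z|=R$ tends to $0$ as $R\to\infty$. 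Cauchy's residue theorem then implies that the sum of all finite residues of $F$ is zero, yielding
\[
b_j = \sum_{k=1}^s \text{Res}_{z=z_k} F(z) + \sum_\ell \text{Res}_{z=y_\ell} F(z),
\]
where $z_k$ and $y_\ell$ enumerate the roots of $g$ inside and outside $U$, respectively.

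At each pole $z_k$ of order $p_k$, the residue is computed by the standard formula $\frac{1}{(p_k-1)!}\,\partial_z^{p_k-1}[(z-z_k)^{p_k} F(z)]_{z=z_k}$; two successive applications of the Leibniz rule --- first splitting off $1/g_k(z)$ with $g_k(z)=g(z)/(z-z_k)^{p_k}$, then splitting the remaining factor into $z^{N-1-j+n}$ and $1/(1-z^N)$ --- produce the double binomial sum and the falling factorial $(N-1-j+n)^{\underline{p'}}$ appearing in the $g$-part of the statement. The exterior residues are handled by the involution $w=1/z$: the identity $h(w) = w^{m+n-1}\,g(1/w)$ sends exterior roots of $g$ bijectively to interior roots of $h$, and a short calculation gives $F(1/w)/w^2 = -w^{j+m-2}/[h(w)(1-w^N)]$. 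Combined with the observation that a small counterclockwise circle around $y_\ell$ maps (under $z\mapsto 1/z$) to a small counterclockwise circle around $1/y_\ell$, since the enclosed region avoids $0$, the two minus signs cancel, and the exterior sum becomes a sum of residues at the $w_k$ of $w^{j+m-2}/[h(w)(1-w^N)]$. The analogous Leibniz expansion produces the $h$-part with its coefficients $c_{h,k,q'}$.

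The conceptual skeleton is entirely the residue theorem applied to the single function $F$; the main technical obstacle is the combinatorial bookkeeping of the two Leibniz expansions and matching them with the stated $c_{g,k,p'}$ and $c_{h,k,q'}$. A secondary but non-trivial point is the careful tracking of the signs introduced by the $w=1/z$ substitution (orientation of contours, the Jacobian $-dw/w^2$, and the powers arising from $h(w) = w^{m+n-1}g(1/w)$ and $1-w^{-N} = -(1-w^N)/w^N$), which on inspection combine precisely so that both sums enter the final formula with a $+$ sign.
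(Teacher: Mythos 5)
Your proposal is correct, and it reaches the stated formula by a genuinely different route than the paper. The paper works with the biinfinite Toeplitz operator $\Ainf$, its explicit right inverse $\Binf$ given by contour integrals, and then periodizes: $b_j=\sum_{\ell}\Binf_{j-N\ell,0}$, where the factor $1/(1-z^N)$ emerges from summing geometric-type series $\sum_\ell z^{N\ell}$ (justified by uniform convergence, which also licenses swapping the sum with differentiation), and the conclusion is pulled back to the finite system via Lemma~\ref{lm:equivalent}. You instead bypass Lemmas~\ref{lm:equivalent} and~\ref{lm:unnamed} entirely: the circulant diagonalization gives $b_j=\frac1N\sum_k\zeta_k^{-j}/f(\zeta_k)$ (and invertibility for free, since $f(\zeta_k)=g(\zeta_k)\zeta_k^{-n}\neq0$), you recognize this finite sum as minus the sum of residues of the single rational function $F(z)=z^{N-1-j+n}/\bigl(g(z)(1-z^N)\bigr)$ at the roots of unity, and since $F=O(z^{-j-m})=O(z^{-2})$ the total residue vanishes, so $b_j$ equals the sum of residues at the roots of $g$; interior roots give the $g$-part, and the exterior ones are transported to interior roots of $h$ via $w=1/z$ using $h(w)=w^{m+n-1}g(1/w)$, with the orientation/Jacobian signs cancelling exactly as you say (this is the same involution the paper uses later to prove the rank bound $m+n$, and your Leibniz bookkeeping reproduces $c_{g,k,p'}$, $c_{h,k,q'}$ verbatim, with $g_k(z)=g(z)/(z-z_k)^{p_k}$, $h_k(w)=h(w)/(w-w_k)^{q_k}$, which is also how the paper's proof actually uses these symbols). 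What each approach buys: yours is shorter, purely finite-dimensional, makes the origin of the $1/(1-z^N)$ factor transparent, and avoids all convergence/interchange arguments and the equivalence lemma; the paper's operator-theoretic route is heavier but isolates reusable ingredients ($\Ainf$, $\Binf$, the periodization mechanism). One side remark: your claim that the biinfinite sum $\sum_\ell\Binf_{j,\ell N}$ "is not literally convergent" is inaccurate --- it converges geometrically because the relevant residues involve only $|z_k|<1$, $|w_k|<1$, which is precisely how the paper obtains $1/(1-z^N)$ --- but this is harmless since your argument never relies on that sum.
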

	
	\begin{proof}
	Note that the Laurent polynomial $f(z)$ corresponding to $A$ does not have roots on $U$, because $g(z) = f(z)z^n$ by the Theorem's condition does not have such roots.
	Thus, the matrix $\Binf$ is defined correctly.
	Moreover, it means that $h(z)$ also does not have roots on $U$.
	
	Let us express the elements of $\Binf$ through the roots of $g(z)$ and $h(z)$.
	As $\Binf$ is a (biinfinite) circulant, it suffices to compute only its first column.
	First, let us perform the substitution $w = z^{-1}$ in the integral~\eqref{eq:B}:
	\[
	\Binf_{j,0}
	=
	\frac{1}{2\pi i} \oint_U \frac{z^{-j-1}dz}{f(z)}
	=
	-\frac{1}{2\pi i} \oint_U \frac{w^{j+1}}{f(w^{-1})}\cdot\frac{-dw}{w^2}
	=
	\frac{1}{2\pi i} \oint_U \frac{w^{j-1}dw}{f(w^{-1})}.
	\]
	Note that there appeared two minuses (one from the differential $d(w^{-1})$ and one from the change of the integral direction)  that gave a plus.
	Now we can split the formula for $\Binf_{j,0}$ into two cases:
	\[
	\Binf_{j,0} =
	\begin{dcases}
	\frac{1}{2\pi i} \oint_U\frac{z^{-j-1}dz}{f(z)}
	=
	\frac{1}{2\pi i} \oint_U\frac{z^{-j+n-1}dz}{g(z)}, & \text{if }j < 0, \\
	\frac{1}{2\pi i} \oint_U\frac{z^{j-1}dz}{f(z^{-1})}
	=
	\frac{1}{2\pi i} \oint_U\frac{z^{j+m-2}dz}{h(z)},  & \text{if } j \ge 0.
	\end{dcases}
	\]
	Note that $g(0) = a_{-n} \neq 0$ and $h(0) = a_{m-1} \neq 0$, so zero is not a root of $g(z)$ and $h(z)$.
	Moreover, as $m \ge 2$ and $n \ge 0$, both powers $j + m - 2$ and $-j + n - 1$ are nonnegative for the corresponding values of $j$.
	Thus the integrands above have singularities only at the roots of $g(z)$ and $h(z)$ respectively.
	We will transform the expression for $j < 0$, as the case of $j \ge 0$ is handled analogously.
	
	Using the residue theorem and the formula for the residue at the pole of order $p_k$, we can write (for $j < 0$):
	\[
	\Binf_{j,0} =
	\frac{1}{2\pi i} \oint_U\frac{z^{-j+n-1}dz}{g(z)} = \sum_{k=1}^s \mathrm{Res}\left(\frac{z^{-j+n-1}}{g(z)}, z_k\right) 
	=
	\sum_{k=1}^s\frac{1}{(p_k - 1)!}\left(\frac{z^{-j+n-1}}{g_k(z)}\right)^{(p_k-1)}_{\big|{z=z_k}}.
	\]
	Using the  higher order product rule, we obtain
	\[
	\Binf_{j,0}
	=
	\sum_{k=1}^s\frac{1}{(p_k - 1)!}\sum_{p=0}^{p_k-1}\binom{p_k-1}{p}
	(z^{-j+n-1})^{(p)}|_{z=z_k} \left(\frac{1}{g_k(z)}\right)^{(p_k-1-p)}_{\big|{z=z_k}}.
	\]
	For $j \ge 0$ the formula is very similar:
	\[
	\Binf_{j,0}
	=
	\sum_{k=1}^t\frac{1}{(q_k - 1)!}\sum_{q=0}^{q_k-1}\binom{q_k-1}{q}
	(z^{j+m-2})^{(q)}|_{z=w_k} \left(\frac{1}{h_k(z)}\right)^{(q_k-1-q)}_{\big|{z=w_k}}.
	\]
	
	Let us now demonstrate that $\xi = \Binf\beta$ is a solution of~\eqref{eq:infinite} and is $N$-periodic.
	First,
	\[
	\sum_{\ell=-\infty}^{\infty} \Binf_{j,\ell} \beta_{\ell}
	=
	\sum_{\ell=-\infty}^{\infty}\Binf_{j, N\ell}
	= 
	\sum_{\ell=-\infty}^{\infty}\Binf_{j-N\ell, 0}.
	\]
	The periodicity of this expression is obvious: if $j = j_1 + Nj_2$, we can write using the change of the summation variable:
	\[
	\sum_{\ell=-\infty}^{\infty}\Binf_{j-N\ell, 0}
	=
	\sum_{\ell=-\infty}^{\infty}\Binf_{j_1-N(\ell-j_2), 0}
	=
	\sum_{\ell=-\infty}^{\infty}\Binf_{j_1-N\ell, 0}.
	\]
	Thus, it is sufficient to consider the case $j = 0,\dots, N-1$. 
	For these values of $j$ we split the sum in the following way:
	\[
	\sum_{\ell=-\infty}^{\infty}\Binf_{j-N\ell, 0} 
	=
	\sum_{\ell=1}^{\infty}\Binf_{j-N\ell, 0}
	+
	\sum_{\ell=-\infty}^{0}\Binf_{j-N\ell, 0}.
	\]
	In the first sum the row index $j - N\ell$ is negative for all values of $\ell$ and in the second sum the row index is nonnegative for all values of $\ell$.
	Thus, we can use the formulas for $B_{j,0}$ obtained above to write
	\begin{align}
	\sum_{\ell=-\infty}^{\infty}B_{j-N\ell, 0} 
	&=
	\sum_{k=1}^s\frac{1}{(p_k - 1)!}\sum_{p=0}^{p_k-1}\binom{p_k-1}{p}
	\left(\frac{1}{g_k(z)}\right)^{(p_k-1-p)}_{\big|{z=z_k}}
	\sum_{\ell=1}^{\infty}
	(z^{-j+N\ell+n-1})^{(p)}|_{z=z_k}  \nonumber
		+\\&+
	\sum_{k=1}^t\frac{1}{(q_k - 1)!}\sum_{q=0}^{q_k-1}\binom{q_k-1}{q}
	\left(\frac{1}{h_k(z)}\right)^{(q_k-1-q)}_{\big|{z=w_k}}
	\sum_{\ell=-\infty}^{0}
	(z^{j-N\ell+m-2})^{(q)}|_{z=w_k}. \label{eq:sum-of-B}
	\end{align}
    As $|z_k| < 1$ and $|w_k| < 1$, the series
    $
    \sum_{\ell=1}^{\infty}
	P(\ell)z^{-j+N\ell+n-1}
	$
    and 
    $
    \sum_{\ell=0}^{\infty}
	P(\ell)z^{j+N\ell+m-2}
    $
	converge uniformly in the (small enough) neighbourhood of $z_k$ and $w_k$ respectively for any polynomial $P(\ell)$.
	Thus, the summation and the $p$-th derivative can be swapped, so we can obtain
	\[
	\sum_{\ell=1}^{\infty}
	(z^{-j+N\ell+n-1})^{(p)}|_{z=z_k}
	=
    \left(\frac{z^{-j+n-1+N}}{1-z^{N}}\right)^{(p)}_{\big|{z=z_k}}
    =
	\sum_{p'=0}^p\binom{p}{p'}(z^{-j+n-1+N})^{(p')}|_{z=z_k} \left(\frac{1}{1-z^{N}}\right)^{(p-p')}_{\big|{z=z_k}}.
	\]
	The other series is computed in the same manner:	
	\[
	\sum_{\ell=0}^{\infty}
	(z^{j+N\ell+m-2})^{(q)}|_{z=w_k}
	=
	\sum_{q'=0}^q\binom{q}{q'}(z^{j+m-2})^{(q')}|_{z=w_k} \left(\frac{1}{1-z^{N}}\right)^{(q-q')}_{\big|{z=w_k}}.
	\]
	Plugging these expression in~\eqref{eq:sum-of-B}, we finally get
	\begin{align*}
	\sum_{\ell=-\infty}^{\infty}B_{j-N\ell, 0} 
	&=
	\sum_{k=1}^s\sum_{p=0}^{p_k-1}\sum_{p'=0}^p\frac{1}{(p_k - 1)!}\binom{p_k-1}{p}\binom{p}{p'}
	\left(\frac{1}{g_k(z)}\right)^{(p_k-1-p)}_{\big|{z=z_k}}
	\left(\frac{1}{1-z^{N}}\right)^{(p-p')}_{\big|{z=z_k}}
	(z^{-j+n-1+N})^{(p')}|_{z=z_k}
	+\\&+
	\sum_{k=1}^t\sum_{q=0}^{q_k-1}\sum_{q'=0}^q
	\frac{1}{(q_k - 1)!}\binom{q_k-1}{q}\binom{q}{q'}
	\left(\frac{1}{h_k(z)}\right)^{(q_k-1-q)}_{\big|{z=w_k}}
	\left(\frac{1}{1-z^{N}}\right)^{(q-q')}_{\big|{z=w_k}}
	(z^{j+m-2})^{(q')}|_{z=w_k}
	\end{align*}
	Changing the summation order and using the formula for $c_{g,k,p'}$ and $c_{h,k,q'}$, we can write:
	\[
	\sum_{\ell=-\infty}^{\infty}\Binf_{j-N\ell, 0} 
	=
	\sum_{k=1}^s\sum_{p'=0}^{p_k-1}
	c_{g,k,p'}
	(z^{-j+n-1+N})^{(p')}|_{z=z_k}
	+
	\sum_{k=1}^t\sum_{q'=0}^{q_k-1}
	c_{h,k,q'}
	(z^{j+m-2})^{(q')}|_{z=w_k}
	\]
	Now, the $p'$-th derivative of a monomial can be written using the falling factorial:
	\[
	(z^{-j+n-1+N})^{(p')}|_{z=z_k} = (-j+n-1+N)^{\underline{p}'} z_k^{-j+n-1+N-p'},
	\]
	the similar holds for $(z^{j+m-2})^{(q')}|_{z=w_k}$.
	So finally we come to
	\[
	\sum_{\ell=-\infty}^{\infty}\Binf_{j-N\ell, 0} 
	=
	\sum_{k=1}^s\sum_{p'=0}^{p_k-1}
	c_{g,k,p'}
	(-j+n-1+N)^{\underline{p}'} z_k^{-j+n-1+N-p'}
	+
	\sum_{k=1}^t\sum_{q'=0}^{q_k-1}
	c_{h,k,q'}
	(j+m-2)^{\underline{q}'}w_k^{j+m-2-q'}.
	\]
	
	Note that we have implicitly shown that the series $\sum_{\ell}\Binf_{j, N\ell}$ converge and, therefore, the biinfinite vector $\xi = \Binf\beta$ is correctly defined (its $N$-periodicity has been shown above).
	Now it remains to demonstrate that $\xi$ is the solution of~\eqref{eq:infinite}:
	\[
	\Ainf\xi = \Ainf(\Binf\beta) = (\Ainf\Binf)\beta = I^{(\infty)}\beta  =\beta.
	\]
	We have used the fact that multiplication of biinfinite matrices $\Ainf$, $\Binf$ and $\beta$ is associative.
	Generally, such multiplication is not associative, but in our case multiplication by $\Ainf$ involves only finite number of summands for each element of the product, so the associativity property holds.
	Application of Lemma~\ref{lm:equivalent} finishes the proof.
	\end{proof}
	
	\begin{corollary}\label{cor:simple-roots}
	Under the conditions of Theorem~\ref{thm:general-inverse}, if both $g(z)$ and $h(z)$ have only simple roots inside the unit circle $U$, then $A$ is invertible and its inverse is the circulant matrix $B\in\mathbb{C}^{N\times N}$ with elements $B_{j,\ell} = b_{(j-\ell)\bmod N}:$
	\[
	b_j
	=
    \sum_{k=1}^s
    \frac{1}{g_k(z_k)(1-z_k^{N})}
	z_k^{-j+n-1+N}
	+
	\sum_{k=1}^t
	\frac{1}{h_k(w_k)(1-w_k^{N})}
	w_k^{j+m-2},
	\]
	where
	\[
	g_k(z) = \frac{g(z)}{z-z_k},~~~
	h_k(z) = \frac{h(z)}{z-w_k}.
	\]
	\end{corollary}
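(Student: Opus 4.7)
The natural approach is to obtain the corollary as an immediate specialization of Theorem~\ref{thm:general-inverse} to the case $p_k = 1$ for all $k = 1,\dots,s$ and $q_k = 1$ for all $k = 1,\dots,t$. Under these assumptions, the double sums in the theorem's formula for $b_j$ each collapse to a single term and the coefficients $c_{g,k,p'}$, $c_{h,k,q'}$ reduce to a single product without any derivatives.

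Concretely, I would observe that with $p_k = 1$ the outer index $p'$ can only take the value $0$, and the inner summation index $p$ can only take the value $0$ as well (since $0 \le p \le p_k - 1 = 0$). The binomial coefficients $\binom{0}{0}$ all equal $1$, the prefactor $1/(p_k - 1)! = 1$, and no derivatives are taken of either $1/g_k(z)$ or $1/(1-z^N)$. Hence
\[
c_{g,k,0} = \frac{1}{g_k(z_k)\,(1 - z_k^N)},
\]
and an identical calculation with $q_k = 1$ gives $c_{h,k,0} = 1/\bigl(h_k(w_k)(1 - w_k^N)\bigr)$. Since the falling factorials $(-j+n-1+N)^{\underline{0}}$ and $(j+m-2)^{\underline{0}}$ are both equal to $1$, the powers of $z_k$ and $w_k$ in the theorem's formula reduce to $z_k^{-j+n-1+N}$ and $w_k^{j+m-2}$, which matches the corollary's statement term by term.

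The only step requiring a brief verification is the reconciliation of the two conventions for $g_k$ and $h_k$: in the theorem, $g_k$ denotes the factor obtained from $g$ by removing $(z-z_k)^{p_k}$ (as implicitly used in the residue computation), while the corollary defines $g_k(z) = g(z)/(z-z_k)$. For a simple root $z_k$ these coincide, and by L'H\^opital's rule one has $g_k(z_k) = g'(z_k)$, so the substitution is direct; analogously for $h_k$. I do not anticipate any substantive obstacle here: the corollary is essentially a bookkeeping specialization of the main theorem, and the only mild subtlety is to ensure that the two notational conventions for $g_k$ and $h_k$ agree in the simple-root case.
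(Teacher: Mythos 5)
Your proof is correct and matches the paper's (implicit) derivation: the corollary is stated as a direct specialization of Theorem~\ref{thm:general-inverse} with $p_k = q_k = 1$, under which the sums collapse and $c_{g,k,0} = 1/\bigl(g_k(z_k)(1-z_k^N)\bigr)$, $c_{h,k,0} = 1/\bigl(h_k(w_k)(1-w_k^N)\bigr)$ exactly as you compute. Your reconciliation of the two conventions for $g_k$, $h_k$ is also the right reading, since the theorem's residue computation implicitly uses $g_k(z) = g(z)/(z-z_k)^{p_k}$, which for a simple root coincides with the corollary's $g(z)/(z-z_k)$.
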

	
	We have proven that if $f(z)$ (or, equivalently, $g(z)$ or $h(z)$) does not have roots on $U$, then $A$ is invertible.
	The reverse, however, is not generally true, as is demonstrated by the following result and a counterexample.
	
	\begin{proposition}
	    The circulant $A \in \mathbb{C}^{N\times N}$ is invertible if and only if the corresponding polynomial $g(z)$ does not have roots of the form $e^{-\frac{2 \pi i}{N}s}$, $s \in \{0, \dots, N-1\}$.
	\end{proposition}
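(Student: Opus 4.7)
The plan is to invoke the standard spectral theory of circulant matrices and reduce invertibility to a condition on the values of the symbol $f(z)$ at the $N$-th roots of unity, then translate that into a condition on $g(z)$.

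First I would recall that any circulant $C=\mathsf{circ}(c_0,\dots,c_{N-1})$ is diagonalized by the Fourier basis: setting $\omega = e^{2\pi i/N}$ and $v_s=(1,\omega^s,\dots,\omega^{(N-1)s})^\top$, a direct computation gives $Cv_s=\lambda_s v_s$ with
\[
\lambda_s=\sum_{j=0}^{N-1} c_j \omega^{-js}=\sum_{j=0}^{N-1} c_j\, e^{-2\pi i j s/N},\qquad s=0,\dots,N-1.
\]
Since $A$ is a full rank matrix iff all its eigenvalues are nonzero, $A$ is invertible if and only if $\lambda_s\neq 0$ for every $s\in\{0,\dots,N-1\}$.

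Next I would plug in the specific first column of $A$ from~\eqref{eq:A}. The entries are $c_j=a_j$ for $j=0,\dots,m-1$, zero for $j=m,\dots,N-n-1$, and $c_j=a_{j-N}$ for $j=N-n,\dots,N-1$. Re-indexing the tail sum by $k=j-N$ and using that $e^{-2\pi i s}=1$, the two pieces combine into
\[
\lambda_s=\sum_{k=-n}^{m-1} a_k\, e^{-2\pi i k s/N}=f\!\left(e^{-2\pi i s/N}\right),
\]
where $f$ is the Laurent polynomial from~\eqref{eq:laurent}. Since $g(z)=z^n f(z)$ and $e^{-2\pi i n s/N}\neq 0$, the equality $\lambda_s=0$ holds exactly when $g(e^{-2\pi i s/N})=0$.

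Combining the two reductions yields the claim: $A$ is singular iff there exists $s\in\{0,\dots,N-1\}$ with $g(e^{-2\pi i s/N})=0$, equivalently $A$ is invertible iff $g(z)$ has no root among the $N$-th roots of unity $\{e^{-2\pi i s/N}\}_{s=0}^{N-1}$. There is no real obstacle here; the only care needed is bookkeeping the sign convention in the DFT and the index shift that turns the circulant wrap-around into the Laurent polynomial $f$. Note that this proposition also recovers, as a special case, the hypothesis of Theorem~\ref{thm:general-inverse}: if $g$ has no roots on all of $U$ then in particular it has no roots at the $N$-th roots of unity, so $A$ is invertible.
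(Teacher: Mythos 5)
Your proof is correct and follows essentially the same route as the paper: diagonalize the circulant via the Fourier matrix, identify $\lambda_s$ with $f(e^{-2\pi i s/N})$ by splitting and re-indexing the first-column sum, and pass from $f$ to $g=z^nf$ since $z^n$ never vanishes on the unit circle. No gaps.
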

	\begin{proof}
	    It is well known that the eigenvalues $\lambda_s$ of $A$ are the elements of column $F_N A_{:,0}$, where $F_N \in \mathbb{C}^{N\times N}$ is the Fourier matrix: $(F_N)_{s,t} = e^{-\frac{2 \pi i}{N}st}$.
	    Thus, 
	    \begin{align*}
	    \lambda_s = \sum_{t=0}^{N-1}e^{-\frac{2 \pi i}{N}st} A_{t,0}
	    &=
	    \sum_{t=0}^{m-1}e^{-\frac{2 \pi i}{N}st} A_{t,0}
	    +
	    \sum_{t=-n}^{-1}e^{-\frac{2 \pi i}{N}s(N+t)} A_{N+t,0}
	    = \\ &=
	    \sum_{t=0}^{m-1}e^{-\frac{2 \pi i}{N}st} a_t
	    +
	    \sum_{t=-n}^{-1}e^{-\frac{2 \pi i}{N}st} a_t
	    =
	    f(e^{-\frac{2 \pi i}{N}s}).
	    \end{align*}
	   $A$ is invertible if and only if $\lambda_s \neq 0$ or, equivalently, $f(e^{-\frac{2 \pi i}{N}s}) \neq 0$ for all $s = 0, \dots, N-1$.
	   This property is equivalent to the statement that $g(z) = z^nf(z)$ does not have roots of the described form.
	\end{proof}
	\begin{example}
	    Let us also construct a counterexample. Consider the following circulant:
	\[
	A = \begin{bmatrix}
	    1 & 0 & 1 \\
	    1 & 1 & 0 \\
	    0 & 1 & 1
	\end{bmatrix}.
	\]
	It is invertible:
	\[
	A^{-1} = \frac{1}{2}
	\begin{bmatrix}
	    1 & 1  & -1 \\
	    -1& 1  & 1 \\
	    1 & -1 & 1
	\end{bmatrix}.
	\]
	But $f(z) = 1 + z$ has a root $(-1) \in U$, so Theorem~\ref{thm:general-inverse} is not applicable.
	\end{example}

\section{QTT rank bounds of circulants}\label{sec:qtt_ranks}

This section is devoted to the derivation of QTT rank bounds of the circulant  matrix inverse.
The following theorem gives us a general result for tensor rank bounds of circulants with a specific first column, which belongs to a low-dimensional space of discrete functions.

Note that in the current and following sections we use letter $i$ to denote indices instead of  imaginary unit in contrast to Section~\ref{sec:circ}. 
For the latter we will use the notation $\sqrt{-1}$.

	\begin{theorem}\label{thm:qtt-rank-general}
		Consider a function $f: \mathbb{Z} \to \mathbb{C}$ and let $f_q(i) \equiv f(i+q)$ for every fixed $q \in \mathbb{Z}$.
 		Assume that the following linear space of functions is finite-dimensional:
		\[
		V \equiv \mathrm{span}\{f_q~|~q \in \mathbb{Z}\}.
		\]
		Consider a circulant $A \in \mathbb{C}^{(N_1N_2) \times (N_1N_2)}$ with the elements $A_{ij} = f((i-j) \bmod N_1N_2)$, and a following ``reshaped'' matrix $\widehat{A} \in \mathbb{C}^{N_1^2 \times N_2^2}$:
		\[
		\widehat{A}_{i_1N_1 + j_1, i_2N_2 + j_2} = A_{i_1N_2 + i_2,j_1N_2 + j_2},~~ i_1, j_1 \in \{0, \dots, N_1-1\},~~i_2, j_2 \in \{0, \dots, N_2-1\}.
		\]
		Then
		\[
		\rank \widehat{A} \le 1 + \dim V.
		\]
	\end{theorem}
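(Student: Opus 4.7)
The plan is to bound $\rank \widehat{A}$ by exhibiting a small subspace of $\mathbb{C}^{N_2^2}$ that contains the entire row space of $\widehat{A}$. Each row, indexed by $(i_1, j_1) \in \{0, \dots, N_1-1\}^2$, is the vector whose $(i_2, j_2)$-entry equals $f\bigl(((i_1 - j_1) N_2 + (i_2 - j_2)) \bmod N_1 N_2\bigr)$. The key observation is that this row depends only on $p \equiv i_1 - j_1$, and that for all but one value of $p$ the $\bmod$ operation simplifies uniformly across the entire $(i_2, j_2)$ range, making the row a shift of $f$.

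Next I would perform a case analysis on $p$, exploiting the fact that $i_2 - j_2 \in \{-(N_2-1), \dots, N_2-1\}$. A direct check gives: for $p \ge 1$ one has $pN_2 + (i_2 - j_2) \in \{1, \dots, N_1 N_2 - 1\}$, so the $\bmod$ is trivial and the row is $(i_2, j_2) \mapsto f_{pN_2}(i_2 - j_2)$; for $p \le -1$ one has $pN_2 + (i_2 - j_2) \in \{-(N_1 N_2 - 1), \dots, -1\}$, so after adding $N_1 N_2$ the row becomes $(i_2, j_2) \mapsto f_{(p+N_1)N_2}(i_2 - j_2)$. The only ``bad'' case is $p = 0$, where the entry $f((i_2-j_2) \bmod N_1 N_2)$ genuinely splits depending on the sign of $i_2 - j_2$; crucially, this expression does not depend on the common value $i_1 = j_1$, so all $N_1$ diagonal rows collapse to a single vector, call it $R_0$.

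To conclude, introduce the linear map $\phi: V \to \mathbb{C}^{N_2 \times N_2}$ defined by $\phi(g)(i_2, j_2) = g(i_2 - j_2)$. Linearity gives $\dim \phi(V) \le \dim V$. By the case analysis, every off-diagonal row ($p \ne 0$) equals $\phi(f_{pN_2})$ or $\phi(f_{(p+N_1)N_2})$, both of which lie in $\phi(V)$ since $f_q \in V$ for every $q$. The $N_1$ diagonal rows all equal the single vector $R_0$. Hence the row space of $\widehat{A}$ is contained in $\phi(V) + \mathrm{span}(R_0)$, so
\[
\rank \widehat{A} \;=\; \dim(\text{row space of } \widehat{A}) \;\le\; \dim \phi(V) + 1 \;\le\; \dim V + 1.
\]

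The only real obstacle is the diagonal case $p = 0$, where the wrap-around in the $\bmod$ prevents $R_0$ from being written as $\phi(g)$ for any $g \in V$; this is precisely what forces the additive ``$+1$'' in the bound. Fortunately, the fact that all $N_1$ diagonal rows collapse into one vector keeps this correction a single dimension rather than growing with $N_1$.
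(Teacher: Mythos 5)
Your proof is correct and takes essentially the same approach as the paper's: a case analysis on $p = i_1 - j_1$ showing that every off-diagonal row is a shift of $f$ evaluated at $i_2 - j_2$ (your map $\phi$ plays exactly the role of the paper's fixed basis $\{g^{(1)},\dots,g^{(r)}\}$ of $V$), while all diagonal rows collapse to one extra vector, giving $\rank \widehat{A} \le \dim V + 1$. Your closing claim that the wrap-around necessarily prevents $R_0$ from lying in $\phi(V)$ is not needed for the bound (and can fail for special $f$), but this does not affect the validity of the argument.
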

	\begin{proof}
	    Let us transform the formula for an element of  $\widehat{A}$:
	    \begin{align*}
			\widehat{A}_{i_1N_1 + i_1, i_2N_2 + i_2} &= 
			f\Big(\big((i_1N_2 + i_2) - (j_1N_2 + j_2)\big) \bmod N_1N_2\Big) = \\ &=
			f\Big(\big((i_1-j_1)N_2 + (i_2 - j_2)\big) \bmod N_1N_2\Big) = \\ &=
			f\big((\Delta_1 N_2 + \Delta_2) \bmod N_1N_2\big),
		\end{align*}
		where
    	\begin{align*}
		\Delta_1 &= \Delta_1(i_1,j_1) \equiv i_1-j_1, \\
		\Delta_2 &= \Delta_2(i_2,j_2) \equiv i_2-j_2.
		\end{align*}

		Note that $\Delta_1 \in [-N_1+1,N_1-1]$ and $\Delta_2 \in [-N_2+1, N_2 -1]$.
		Thus, $\Delta_1N_2 + \Delta_2  \in [-N_1N_2+1, N_1N_2 - 1]$, so
		\[
		(\Delta_1N_2 + \Delta_2) \bmod N_1N_2 =
		\begin{cases}
		    \Delta_1N_2 + \Delta_2,          & \text{ if } \Delta_1 > 0, \\
		    \Delta_1N_2 + \Delta_2 + N_1N_2, & \text{ if } \Delta_1 < 0, \\
		    \Delta_2 \bmod N_1,         & \text{ if } \Delta_1 = 0.
		\end{cases}
		\] 
		Now it can be seen that the row $u \equiv \widehat{A}_{i_1N_1+j_1,:}$, corresponding to  any $\Delta_1 \neq 0$ (i.e. $i_1 \neq j_1$) has the form
		\begin{equation}\label{eq:column}
		 	u_{i_2N_2+j_2} = 
		 	f\big(\Delta_2(i_2,j_2) + \varphi(\Delta_1)\big)
		\end{equation}
		for some $\varphi(\Delta_1) \in \mathbb{Z}$.
		Let us fix a basis $\{g^{(1)}, \dots, g^{(r)}\}$ of the function space  $V$.
		Each row $u$ of the form~\eqref{eq:column} can be expressed as a linear combination of columns $v^{(1)}, \dots, v^{(r)} \in \mathbb{C}^{N_2^2}$:
		\[
			v^{(i)}_{i_2N_2+j_2} = 
			g^{(i)}(\Delta_2(i_2,j_2)).
		\]
        		
		On the other hand, the rows of $\widehat{A}$ corresponding to $\Delta_1 = 0$ (i.e. $i_1 = j_1$) are all equal to each other and to the vector $v^{(r+1)} \in \mathbb{C}^{N_2^2}$ with the elements
		\[
		v^{(r+1)}_{i_2N_2+j_2} = f(\Delta_2(i_2, j_2) \bmod N_1).
		\]
		We have proven that $\mathrm{Im}(\widehat{A}) \subset \mathrm{span}\{v^{(1)}, \dots, v^{(r+1)}\}$.
		Thus, the rank of $\widehat{A}$ does not exceed $1 + \dim V$.
	\end{proof}
    \begin{corollary}
        If under the conditions of Theorem~\ref{thm:qtt-rank-general} 
        the circulant $A$ is of shape ${\base^\LL \times \base^\LL}$ for some positive integer $\LL$,
        then it admits a QTT representation with the ranks not greater than $1 + \dim V$.
    \end{corollary}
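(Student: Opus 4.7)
The plan is to reduce the QTT rank bound for $A$ to an application of Theorem~\ref{thm:qtt-rank-general} at each of the $L-1$ nontrivial splittings of the dimension $L$, and then to invoke the standard existence result that turns unfolding-rank bounds into a QTT decomposition.

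First, for each $k \in \{1, \dots, L-1\}$, I would apply Theorem~\ref{thm:qtt-rank-general} with the specific choice $N_1 = 2^k$, $N_2 = 2^{L-k}$. This is allowed because the first column of $A$ is determined by the same function $f$ regardless of how $N_1 N_2 = 2^L$ is factored, so the same finite-dimensional space $V$ governs every splitting. The theorem then yields a reshaped matrix $\widehat{A}_k \in \mathbb{C}^{2^{2k} \times 2^{2(L-k)}}$ with $\rank \widehat{A}_k \le 1 + \dim V$.

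Next, I would identify $\widehat{A}_k$ with the standard $k$-th QTT matrix unfolding $A^{\{k\}}$, whose rows are indexed by $(i_1,j_1,\dots,i_k,j_k)$ and columns by $(i_{k+1},j_{k+1},\dots,i_L,j_L)$ in the binary convention of Section~1. Under the identification $i_1^{\mathrm{thm}} = \overline{i_k \dots i_1}$ (the top $k$ bits) and $i_2^{\mathrm{thm}} = \overline{i_L \dots i_{k+1}}$ (the bottom $L-k$ bits), and analogously for $j$, the matrix $\widehat{A}_k$ agrees with $A^{\{k\}}$ up to a permutation of its rows (interleaving the bits of $i$ and $j$) and a corresponding permutation of its columns. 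Since row and column permutations preserve rank, we get $\rank A^{\{k\}} \le 1 + \dim V$ for every $k \in \{1,\dots,L-1\}$.

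Finally, I would invoke the standard QTT existence theorem (the matrix analogue of TT-SVD from~\cite{osel-tt-2011}): if every unfolding $A^{\{k\}}$ of a matrix has rank at most $r$, then the matrix admits a QTT representation of the form~\eqref{eq:tt_mat} with ranks $r_k = \rank A^{\{k\}}$. Taking $r = 1+\dim V$ yields the claim.

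The main obstacle is the bookkeeping step relating $\widehat{A}_k$ to $A^{\{k\}}$: one must carefully track the paper's binary convention $i = \sum_{s=1}^{L} 2^{L-s} i_s$ (in which $i_1$ is the most significant bit) and verify that grouping the first $k$ of these bits into $i_1^{\mathrm{thm}}$ and the last $L-k$ into $i_2^{\mathrm{thm}}$ is exactly the factorization $i = i_1^{\mathrm{thm}} N_2 + i_2^{\mathrm{thm}}$ required by Theorem~\ref{thm:qtt-rank-general}. Beyond this indexing check the argument is purely formal, so no further analytic difficulty arises.
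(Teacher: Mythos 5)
Your proposal is correct and follows essentially the same route as the paper: apply Theorem~\ref{thm:qtt-rank-general} with $N_1=2^k$, $N_2=2^{L-k}$ for each $k$, observe that the reshaped matrix $\widehat{A}_k$ coincides with the $k$-th QTT unfolding up to row and column permutations, and invoke the unfolding-rank characterization of TT ranks from~\cite{osel-tt-2011}. Your bit-grouping check $i=\overline{i_k\dots i_1}\,N_2+\overline{i_L\dots i_{k+1}}$ is exactly the identification used in the paper's proof.
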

    \begin{proof}
    First, we recall the fact that the $k$-th QTT rank of $A$, $k = 1,\dots,\LL-1$, is equal to the rank of unfolding matrix $A_k \in \mathbb{C}^{\base^{2k} \times \base^{2(\LL-k)}}$ (see~\cite{osel-tt-2011}):
    \[
    (A_k)_{\overline{j_ki_k\dots j_1i_1},~\overline{j_{\LL}i_{\LL}\dots j_{k+1}i_{k+1}}} = A_{\overline{i_{\LL}\dots i_1},~\overline{j_{\LL}\dots j_1}}.
    \]
    Let us denote $N_1 \equiv \base^k$, $N_2 \equiv \base^{\LL-k}$ and consider the matrix $\widehat{A}_k \in \mathbb{C}^{N_1^2 \times N_2^2}$ from Theorem~\ref{thm:qtt-rank-general}:
    \[
    (\widehat{A}_k)_{\overline{i_k\dots i_1 j_k \dots j_1}, \overline{i_\LL\dots i_{k+1} j_\LL \dots j_{k+1}}} = A_{\overline{i_{\LL}\dots i_1},~\overline{j_{\LL}\dots j_1}}.
    \]
    From the Theorem it follows that $\rank \widehat{A} \le 1 + \dim V$.
    It remains to notice that $A_k$ can be obtained from $\widehat{A}_k$ by permuting its rows and columns.
    In other words, $A_k = P_1 \widehat{A}_k P_2$ where $P_1$ and $P_2$ are permutation matrices of appropriate size.
    Thus, $\rank A_k = \rank \widehat{A}_k \le \dim V + 1$.
    \end{proof}

	\begin{corollary}\label{cor:qtt-ranks-special-circulant}
		Let $A \in \mathbb{C}^{\base^\LL \times \base^\LL}$ be a circulant with elements $f((i-j) \bmod \base^\LL)$ where \[
			f(i) = \sum_{k=1}^s P_k(i) z_k^i
		\]
		for some polynomials $P_1(i),\dots,P_s(i)$ of degrees $p_1, \dots, p_s$ respectively, and $z_1, \dots, z_s \in \mathbb{C}$.
		Then QTT ranks of $A$ do not exceed $s + 1 + p_1 + \dots + p_s$.
	\end{corollary}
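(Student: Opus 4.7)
The plan is to apply the second corollary of Theorem~\ref{thm:qtt-rank-general} (the one that turns a rank bound on $\widehat{A}$ into a QTT-rank bound). The whole task therefore reduces to identifying a finite-dimensional function space that contains every shift $f_q$ and bounding its dimension by $s + p_1 + \dots + p_s$.

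First I would fix an arbitrary $q \in \mathbb{Z}$ and expand
\[
f_q(i) = f(i+q) = \sum_{k=1}^{s} P_k(i+q)\, z_k^{i+q} = \sum_{k=1}^{s} z_k^{q}\, P_k(i+q)\, z_k^{i}.
\]
Since $P_k$ has degree $p_k$, the shifted polynomial $P_k(\cdot + q)$ is again a polynomial in $i$ of degree at most $p_k$, and therefore lies in the linear span of the monomials $1, i, i^{2}, \dots, i^{p_k}$. Hence $f_q$ belongs to the fixed ($q$-independent) space
\[
W \equiv \mathrm{span}\bigl\{\, i^{j} z_k^{i} \,:\, k=1,\dots,s,\ j=0,\dots,p_k \,\bigr\}.
\]

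This space is spanned by $\sum_{k=1}^{s} (p_k+1) = s + p_1 + \dots + p_s$ explicit functions, so $\dim W \le s + p_1 + \dots + p_s$. Since every $f_q$ belongs to $W$, the space $V \equiv \mathrm{span}\{f_q : q \in \mathbb{Z}\}$ from Theorem~\ref{thm:qtt-rank-general} is a subspace of $W$ and inherits the same dimension bound. Applying the corollary of Theorem~\ref{thm:qtt-rank-general} to the circulant $A$ of size $2^\LL \times 2^\LL$, its QTT ranks are bounded by $1 + \dim V \le 1 + s + p_1 + \dots + p_s$, which is exactly the claim.

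There is no real obstacle here: the only substantive observation is that shifting a polynomial in its argument preserves its degree, so the shift operator sends the generating set $\{i^{j} z_k^{i}\}$ of $W$ into $W$ itself. Everything else is a direct invocation of the already-proven Theorem~\ref{thm:qtt-rank-general} and its corollary.
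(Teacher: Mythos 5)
Your proposal is correct and follows essentially the same route as the paper: both observe that shifting preserves polynomial degree, so every $f_q$ lies in the span of $\{i^j z_k^i : j=0,\dots,p_k,\ k=1,\dots,s\}$, giving $\dim V \le s + p_1 + \dots + p_s$, and then invoke the corollary of Theorem~\ref{thm:qtt-rank-general} to bound the QTT ranks by $1 + \dim V$.
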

	\begin{proof}
		Note that
		\[
		f_j(i) = f(i+j) = \sum_{k=1}^s P_k(i+j)z_k^{i+j} = \sum_{k=1}^s P_{k,j}(i)z_k^{i}
		\]
		for some polynomials $P_{k,j}(i)$ of degrees $p_1,\dots,p_s$ respectively.
		Thus, the set of functions
		\[
		\{z_1^i,iz_1^i,\dots,i^{p_1}z_1^i,\dots,z_s^i,iz_s^i,\dots, i^{p_s}z_s^i\}
		\]
		contains the basis of space $V$, so $\dim V \le (1+p_1)+\dots+(1+p_s)$.
	\end{proof}
	\begin{corollary}
		Fix an arbitrary positive integer $\LL$ and let $A \in \mathbb{C}^{\base^\LL \times \base^\LL}$ be a circulant satisfying the conditions of Theorem~\ref{thm:general-inverse}.
		Then the QTT ranks of $A^{-1}$ do not exceed $m+n$.
	\end{corollary}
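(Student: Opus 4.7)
The plan is to combine Theorem~\ref{thm:general-inverse} with Corollary~\ref{cor:qtt-ranks-special-circulant}; the only real work is a counting identity for root multiplicities. First I would apply Theorem~\ref{thm:general-inverse} to rewrite the first column of $A^{-1}$ in the form
\[
b_j = \sum_{k=1}^s \widetilde P_k(j)\,(z_k^{-1})^j + \sum_{k=1}^t \widehat P_k(j)\,w_k^j,
\]
by absorbing the factors $z_k^{n-1+N-p'}$ and $w_k^{m-2-q'}$ into the coefficients, using that $z_k^{-j+n-1+N-p'}$ is a constant times $(z_k^{-1})^j$ and that $(-j+n-1+N)^{\underline{p}'}$ is a polynomial in $j$ of degree exactly $p'$. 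This yields $\widetilde P_k$ of degree at most $p_k-1$ and $\widehat P_k$ of degree at most $q_k-1$, which is precisely the form required by Corollary~\ref{cor:qtt-ranks-special-circulant}.

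Next I would observe that the $s+t$ exponential bases $z_1^{-1},\dots,z_s^{-1},w_1,\dots,w_t$ are pairwise distinct, since $|z_k^{-1}|>1$ while $|w_\ell|<1$. Applying Corollary~\ref{cor:qtt-ranks-special-circulant} with these bases and with polynomial degrees $p_k-1$ and $q_k-1$ then immediately gives the QTT rank bound
\[
(s+t) + 1 + \sum_{k=1}^s (p_k-1) + \sum_{k=1}^t (q_k-1) \;=\; 1 + \sum_{k=1}^s p_k + \sum_{k=1}^t q_k.
\]

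The hard part will be to establish the identity $\sum_k p_k + \sum_k q_k = m+n-1$; this is the one place where the structure of $g$ and $h$ really enters. My approach would be to note that $g(z)=z^n f(z)$ and $h(z)=z^{m-1} f(z^{-1})$ are both polynomials of degree exactly $m+n-1$, with $g(0)=a_{-n}\ne 0$ and $h(0)=a_{m-1}\ne 0$, so all their roots are nonzero. The map $z\mapsto z^{-1}$ then gives a multiplicity-preserving bijection between the roots of $g$ and the roots of $h$ that swaps the interior and the exterior of $U$. Since $g$ has no root on $U$ by hypothesis, its $m+n-1$ roots (with multiplicity) split as $\sum p_k$ inside $U$ and $(m+n-1)-\sum p_k$ outside $U$; the latter corresponds via the bijection to the roots of $h$ inside $U$, of total multiplicity $\sum q_k$. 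This yields the identity and hence the final bound $1+(m+n-1)=m+n$.
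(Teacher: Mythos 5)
Your proposal is correct and follows essentially the same route as the paper: rewrite the first column from Theorem~\ref{thm:general-inverse} in the form required by Corollary~\ref{cor:qtt-ranks-special-circulant} to get the bound $1+\sum_k p_k+\sum_k q_k$, then use the reciprocal relation between $g$ and $h$ (the paper writes it as $h(z)=g(z^{-1})z^{m+n-1}$) to identify the roots of $h$ inside $U$ with the roots of $g$ outside $U$, so the multiplicities sum to $m+n-1$. The only cosmetic difference is your remark that the bases $z_k^{-1}$, $w_\ell$ are distinct, which Corollary~\ref{cor:qtt-ranks-special-circulant} does not actually require.
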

	\begin{proof}
        From Theorem~\ref{thm:general-inverse} it follows that $(A^{-1})_{ij} = f((i-j) \bmod \base^\LL)$, where 
        \[
        f(i) = 
    	\sum_{k=1}^s\sum_{p'=0}^{p_k-1}
    	c_{g,k,p'}
    	(-i+n-1+N)^{\underline{p}'} z_k^{-i+n-1+N-p'}
    	+
    	\sum_{k=1}^t\sum_{q'=0}^{q_k-1}
    	c_{h,k,q'}
    	(i+m-2)^{\underline{q}'}w_k^{i+m-2-q'}.
        \]
        We can rewrite this equality in the following way:
        \begin{align*}
        f(i) &= 
    	\sum_{k=1}^sP_k(i)\left(\frac{1}{z_k}\right)^{i}
    	+
    	\sum_{k=1}^tQ_k(i)w_k^{i},\\
    	P_k(i) &= 
    	\sum_{p'=0}^{p_k-1}
    	c_{g,k,p'}
    	z_k^{n-1+N-p'}
    	(-i+n-1+N)^{\underline{p}'}, \\
        Q_k(i) &=
    	\sum_{q'=0}^{q_k-1}
    	c_{h,k,q'}
    	w_k^{m-2-q'}
    	(i+m-2)^{\underline{q}'}.
        \end{align*}
        Obviously, $P_k(i)$ and $Q_k(i)$, viewed as functions of $i$, are polynomials of degree $p_k-1$ and $q_k-1$ respectively.
        From Corollary~\ref{cor:qtt-ranks-special-circulant} it follows that QTT ranks of $A^{-1}$ do not exceed
        \begin{equation}\label{eq:rank}
        1 + \sum_{k=1}^s p_k + \sum_{k=1}^t q_k.    
        \end{equation}
        
        Consider all roots of $g(z)$:  $z_1, \dots, z_s, z_{s+1}, \dots, z_{s'}$ with their respective multiplicities: $p_1, \dots, p_s$, $p_{s+1}, \dots, p_{s'}$.
        Here $z_k$ lies inside the unit circle $U$ for $k \le s$ and outside of it for $s < k \le s'$.
        Next we note that $h(z) = g(z^{-1})z^{m+n-1}$.
        Together with the facts that $\deg h(z) = \deg g(z)$ and $a_{m-1} \neq 0$ and $a_{-n} \neq 0$ it implies that the roots of $h(z)$ are $z_1^{-1}, \dots, z_s^{-1}, z_{s+1}^{-1}, \dots, z_{s'}^{-1}$ (with respective multiplicities $p_1, \dots, p_s, p_{s+1}, \dots, p_{s'}$).
        Thus we can conclude that $s' = s + t$ and for some permutation $\sigma \in S_t$ we have $w_{k} = z_{s+\sigma(k)}^{-1}$ and $q_k = p_{s+\sigma(k)}$.
        
        As the sum of multiplicities of roots of a polynomial of degree $(m + n -1)$   equals $(m+n-1)$, we can use~\eqref{eq:rank} to state that QTT ranks of $A^{-1}$ do not exceed
        \[
            1 + \sum_{k=1}^s p_k + \sum_{k=1}^t p_{s+\sigma(k)}
            =
            1 + \sum_{k=1}^{s'} p_k = 1 + (m+n-1) = m+n. 
        \]
	\end{proof}

\section{Explicit QTT representation} \label{sec:qtt_repr}

To derive an explicit QTT representation of a matrix, it is convenient to introduce the so-called strong Kronecker product~\cite{khkaz-lap-2012}.
Before we define it, let us introduce the \emph{core matrices} $Q_k$, associated with the $k$-th core $G^{(k)}$, $k=1,\dots,\LL$, as follows:
\begin{equation}\label{eq:core_mat}
    Q_k = 
    \begin{bmatrix}
        G^{(k)}(1, :, :, 1) & \dots & G^{(k)}(1, :, :, r_k) \\
        \vdots & \ddots & \vdots \\
        G^{(k)}(r_{k-1}, :, :, 1) & \dots & G^{(k)}(r_{k-1}, :, :, r_k)
    \end{bmatrix}
    \in\mathbb{C}^{2r_{k-1} \times 2r_k}.
\end{equation}
The strong Kronecker product, denoted as $\Join$, is defined for such block matrices. 
\begin{definition}
Let $A$ and $B$ be block matrices, both with $p\times q$ and $q\times r$ blocks $A_{\alpha\gamma}$, $B_{\gamma\beta}$ of the size $2\times 2$, where $\alpha=1,\dots,p$, $\beta=1,\dots,r$, $\gamma=1,\dots,q$. Their strong Kronecker product $A\Join B$ is a $p\times r$ block matrix with the blocks of the size $4\times 4$ such that:
\[
    (A\Join B)_{\alpha\beta} =\sum_{\gamma=1}^q A_{\alpha\gamma} \otimes B_{\gamma \beta}.
\]
\end{definition}
Now we can write the matrix $B$, given by its QTT cores $G^{(k)}$ and the respective core matrices $Q_k$ (Eq.~\eqref{eq:core_mat}) in terms of the strong Kronecker product as~\cite{khkaz-lap-2012}:
\[
    B = Q_1 \Join Q_2 \Join \dots \Join Q_\LL.
\]

\begin{lemma}[\cite{khkaz-lap-2012},\cite{osel-tt-2011}] \label{lm:sumtt}
Let $B_1 = Q_1^{(1)} \Join \dots \Join Q_\LL^{(1)}$ and $B_2 = Q_1^{(2)} \Join \dots \Join Q_\LL^{(2)}$. Then for $c_1,c_2\in\mathbb{C}$, the QTT representation of $c_1 B_1 + c_2 B$ can be written in terms of its core matrices as
\[
    c_1 B_1 + c_2 B_2 = 
    \begin{bmatrix}
        Q_1^{(1)} & Q_1^{(2)}
    \end{bmatrix}
    \Join 
    \begin{bmatrix}
        Q_2^{(1)} \\ & Q_2^{(2)}
    \end{bmatrix}
    \Join 
    \dots 
    \Join
    \begin{bmatrix}
        Q_{\LL-1}^{(1)} \\ & Q_{\LL-1}^{(2)}
    \end{bmatrix}
    \Join
    \begin{bmatrix}
        c_1 Q_\LL^{(1)} \\  c_2 Q_\LL^{(2)}
    \end{bmatrix}.
\]
\end{lemma}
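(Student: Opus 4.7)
The plan is to reduce the lemma to a single algebraic identity describing how $\Join$ interacts with horizontal concatenation of block matrices on the left and block-diagonal structure on the right. Once this identity is in hand, the lemma follows by sweeping through the cores from left to right, absorbing one middle factor at a time while keeping the partial product in a $1 \times 2$ block-row shape that keeps the two QTT chains decoupled.

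First, I would record two preliminary facts. Associativity of $\Join$: if $X$, $Y$, $Z$ are conformable block matrices, then $(X \Join Y) \Join Z = X \Join (Y \Join Z)$, which is immediate from associativity of the Kronecker product applied blockwise combined with interchanging the two summations. Second, the key splitting identity: if $X = [\,X_1\ X_2\,]$ has $p \times (q_1 + q_2)$ block structure and $Y = \mathrm{diag}(Y_1, Y_2)$ has $(q_1+q_2) \times (r_1+r_2)$ block structure with matching inner dimensions, then
\[
    X \Join Y = [\,X_1 \Join Y_1\ \ X_2 \Join Y_2\,].
\]
This reads off directly from $(X \Join Y)_{\alpha\beta} = \sum_\gamma X_{\alpha\gamma} \otimes Y_{\gamma\beta}$: for output block-column index $\beta \le r_1$ only $\gamma \le q_1$ contributes because $Y_{\gamma\beta} = 0$ otherwise, and symmetrically for $\beta > r_1$.

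With these facts I would prove the lemma by a short induction on the number of middle cores already absorbed. Denoting by $P_k^{(j)} = Q_1^{(j)} \Join \cdots \Join Q_k^{(j)}$ the partial product of the first $k$ cores of chain $j \in \{1,2\}$, iterated application of the splitting identity yields
\[
    [\,Q_1^{(1)}\ Q_1^{(2)}\,] \Join \mathrm{diag}(Q_2^{(1)}, Q_2^{(2)}) \Join \cdots \Join \mathrm{diag}(Q_{\LL-1}^{(1)}, Q_{\LL-1}^{(2)}) = [\,P_{\LL-1}^{(1)}\ P_{\LL-1}^{(2)}\,].
\]
For the final step, the rightmost factor has $(r_{\LL-1}^{(1)} + r_{\LL-1}^{(2)}) \times 1$ block shape with $c_1 Q_\LL^{(1)}$ stacked above $c_2 Q_\LL^{(2)}$. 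Unfolding the definition of $\Join$ at this last step and pulling the scalars out of the Kronecker product gives $c_1 (P_{\LL-1}^{(1)} \Join Q_\LL^{(1)}) + c_2 (P_{\LL-1}^{(2)} \Join Q_\LL^{(2)}) = c_1 B_1 + c_2 B_2$, as required.

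I do not expect any conceptual obstacle; the main care needed is notational bookkeeping to confirm that the $1 \times 2$ block-row shape is preserved after every middle step, which is automatic thanks to the block-diagonal form of those factors, and to handle the scalar factors $c_1, c_2$ only at the very end so that the bilinear structure of $\otimes$ does not spread them to earlier cores.
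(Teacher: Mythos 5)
Your proof is correct. Note, however, that the paper does not prove Lemma~\ref{lm:sumtt} at all: it is quoted from \cite{khkaz-lap-2012} and \cite{osel-tt-2011}, so there is no in-paper argument to compare against. Your route --- the splitting identity $[\,X_1\ X_2\,]\Join\mathrm{diag}(Y_1,Y_2)=[\,X_1\Join Y_1\ \ X_2\Join Y_2\,]$, associativity of $\Join$, and extraction of the scalars $c_1,c_2$ only at the last ($r\times 1$ block) factor --- is exactly the standard argument behind the cited result, and each step checks out: the splitting identity follows from the vanishing off-diagonal blocks of the middle factors, and the final step is a direct unfolding of the definition of $\Join$ together with bilinearity of the Kronecker product. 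The only bookkeeping point worth making explicit is that after absorbing $k$ cores the blocks of $P_k^{(j)}$ have size $2^k\times 2^k$ rather than $2\times 2$, so you are implicitly using the natural extension of the paper's definition of $\Join$ to larger blocks (which is already needed just to write $B=Q_1\Join\dots\Join Q_\LL$), and that the block-column/block-row counts match at every step; both are immediate, and your argument is complete as stated.
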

A direct consequence of Lemma~\ref{lm:sumtt} is that the QTT ranks of a sum of two QTT matrices are bounded by the sum of the QTT ranks of the summands.

Next, let us move to the derivation of the explicit QTT representation of a circulant matrix inverse. 
Let us introduce a cyclic permutation matrix
\[
    \perm_\LL = 
    \begin{bmatrix}
        0 & & & 1 \\
        1 & \ddots \\
        & \ddots & \ddots \\
        & & 1 & 0
    \end{bmatrix}\in\mathbb{R}^{2^\LL \times 2^\LL},
\]
which allows us to naturally represent any circulant in terms of the powers of $\perm_\LL$:
\[
    \mathsf{circ}(c_0,c_1,\dots, c_{2^{\LL}-1}) = c_0 I + c_1 P_\LL + \dots + c_{2^\LL} P_\LL^{2^\LL -1}.
\]
The following explicit QTT representation holds for the $\overline{i_1\dots i_\LL}$-th power of $\perm_\LL$.

\begin{lemma}[\cite{khkaz-conv-2013}, Lemma 3.2] \label{lm:perm}
Let $\LL \geq 2$. Then $\perm_\LL^{\,\overline{i_1\dots i_\LL}}$ admits a QTT representation with the ranks $(2,2,\dots,2)$:
\[
    \perm_\LL^{\,\overline{i_1\dots i_\LL}} = U_{i_\LL} \Join V_{i_{\LL-1}} \Join \dots \Join V_{i_{2}} \Join W_{i_1},
\]
where
\[
\begin{split}
    &U_0 = 
    \begin{bmatrix}
        I & H
    \end{bmatrix},
    \quad
    U_1 = 
    \begin{bmatrix}
        H & I
    \end{bmatrix},
    \\
    &V_0 = 
    \begin{bmatrix}
        I & J' \\
         & J
    \end{bmatrix},
    \quad
    V_1 = 
    \begin{bmatrix}
        J' &  \\
        J  & I 
    \end{bmatrix},
    \\
    &W_0 = 
    \begin{bmatrix}
        I  \\ \\
    \end{bmatrix},
    \quad
    W_1 = 
    \begin{bmatrix}
        J' \\ J
    \end{bmatrix},
\end{split}
\]
and \[I = \begin{bmatrix}1 & 0 \\ 0 & 1 \end{bmatrix}, \quad J = \begin{bmatrix}0 & 1 \\ 0 & 0 \end{bmatrix},\quad J' = \begin{bmatrix}0 & 0 \\ 1 & 0 \end{bmatrix}.\]
\end{lemma}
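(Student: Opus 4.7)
My plan is to interpret $\perm_\LL^{\overline{i_1 \ldots i_\LL}}$ as the indicator matrix of the arithmetic relation $a \equiv b + k \pmod{2^\LL}$, with $k = \overline{i_1 \ldots i_\LL}$, and to realise this relation as a binary addition performed bit by bit, with a single bit of carry state propagating along the tensor train. The carry state naturally supplies the bond dimension~$2$ that appears in the lemma.

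Concretely, with the paper's convention that the leftmost symbol in an overline carries the least weight, the bit of a matrix index at weight $2^{\LL-s}$ is its $s$-th subscripted entry, while the exponent bit at the same weight is $i_{\LL - s + 1}$. Writing $a_s, b_s$ for the matrix bits and $k_s := i_{\LL - s + 1}$ for the exponent bit, the identity $a \equiv b + k \pmod{2^\LL}$ is equivalent to the existence of carries $c_0, c_1, \ldots, c_\LL \in \{0, 1\}$ with $c_\LL = 0$ such that, for every $s = 1, \ldots, \LL$,
\[
a_s \equiv b_s + k_s + c_s \pmod 2, \qquad c_{s-1} = \lfloor (b_s + k_s + c_s)/2 \rfloor,
\]
and $c_0$ is free because the cyclic arithmetic discards the final overflow. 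This expresses $(\perm_\LL^k)_{a,b}$ as a tensor network with bond variables $c_0, \ldots, c_{\LL-1}$ of dimension~$2$.

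Assembling this network in the core-matrix form of Equation~\eqref{eq:core_mat}, an interior core at position $s$ (for $2 \leq s \leq \LL - 1$) becomes a $4 \times 4$ block matrix whose $(c_{s-1}, c_s)$-block, read as a $2 \times 2$ matrix in $(a_s, b_s)$, encodes the two local constraints. A direct case analysis on $(c_{s-1}, c_s, k_s) \in \{0, 1\}^3$ yields precisely the entries $I$, $J$, $J'$, or~$0$, arranged into $V_0$ when $k_s = 0$ and into $V_1$ when $k_s = 1$. For the last core ($s = \LL$), the boundary condition $c_\LL = 0$ removes the block column indexed by $c_\LL = 1$ and collapses the core to $W_{k_\LL}$. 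For the first core ($s = 1$), summing over the free bond $c_0$ collapses the two block rows into one, and its off-diagonal entries become $J + J'$; setting $H := J + J'$ (the $2 \times 2$ swap matrix) identifies the result with $U_{k_1}$. With the substitution $k_s = i_{\LL - s + 1}$, the cores line up as $U_{i_\LL} \Join V_{i_{\LL-1}} \Join \cdots \Join V_{i_2} \Join W_{i_1}$, as claimed.

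I expect the main obstacle to be not conceptual but rather the careful accounting of two distinct orientation conventions in the overline notation (one for the matrix indices, one for the exponent), together with the boundary treatment at the first and last cores where the bond is either truncated ($c_\LL = 0$) or summed out ($c_0$); once these are pinned down, the remainder is a short enumeration of at most eight cases per core.
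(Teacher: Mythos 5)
The paper itself offers no proof of this lemma: it is quoted verbatim from~\cite{khkaz-conv-2013} (Lemma 3.2), so there is no internal argument to compare against, and your proposal should be judged as a stand-alone proof — which it is, and a correct one. Reading $(\perm_\LL^{k})_{a,b}$ as the indicator of $a\equiv b+k\pmod{2^\LL}$ and letting the bond between cores $s$ and $s+1$ carry the binary-addition carry $c_s$ (with $c_\LL=0$ at the least significant position and the overflow $c_0$ summed out) is sound, and the bookkeeping you flag as the main hazard is handled correctly: at position $s$ the matrix bit pair is $(i_s,j_s)$ and the exponent bit is $i_{\LL-s+1}$, which is exactly why the cores line up as $U_{i_\LL}\Join V_{i_{\LL-1}}\Join\dots\Join V_{i_2}\Join W_{i_1}$. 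I checked the local case analysis: with carry values $0,1$ mapped to block rows/columns $1,2$, the interior core gives blocks $I,J',0,J$ for $k_s=0$ and $J',0,J,I$ for $k_s=1$, i.e.\ precisely $V_0$ and $V_1$; the boundary cores give $[\,I\;\;H\,]$, $[\,H\;\;I\,]$ and the block columns $W_0$, $W_1$. Two points deserve explicit mention in a full write-up: first, for fixed $(a,b,k)$ the carry string is uniquely determined by the recursion, so the contraction over bond states yields the $0/1$ indicator and not a larger count; second, $H$ is never defined in the lemma as quoted, and your identification $H=J+J'$ (the $2\times2$ exchange matrix) is the intended one and is what the boundary summation over $c_0$ produces. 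Compared with the original reference, which establishes the formula by direct verification of the strong Kronecker (block) products of shift matrices level by level, your carry-automaton argument is more structural: it explains a priori why bond dimension $2$ suffices and generates the specific cores rather than merely confirming them.
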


Corollary~\ref{cor:simple-roots} provides an explicit formula in case of the simple roots of $g(z)$ and $h(z)$, which allows us to write $b_j$ as a weighted sum of the exponents of the form $z_t^{\pm j}$.
The next proposition provides an explicit QTT representation in this case.

\begin{proposition} \label{prop:sum_z}
    Let $\LL >2$ and consider a circulant $\invA_\LL \in \mathbb{C}^{2^\LL \times 2^\LL}$ defined by its first column
    \[
        (\invA_\LL)_j = \alpha_1 w_1^j + \dots + \alpha_r w_r^j,
    \]
    where $\alpha_t,w_t\in\mathbb{C}$, $t=1,\dots,r$, are given constants.
    Then $\invA_\LL$ admits an explicit QTT representation with the ranks $(2,r+1,r+1\dots, r+1)$:
    \[
        \invA_\LL = Q_1 \Join Q_2\Join \dots \Join Q_\LL,
    \]
    where 
    \[
    \begin{split}
        &Q_1 = 
        \begin{bmatrix}
            I & H
        \end{bmatrix},
        \\
        &Q_2 = 
        \begin{bmatrix}
            I
            &
            \KK_{1,2} & \dots & \KK_{r,2} 
            \\
             &
             \MM_{1,2} & \dots & \MM_{r,2}
        \end{bmatrix},
        \\
    &Q_k = 
    \begin{bmatrix}
        I
        &
        \KK_{1,k} & \cdots & \KK_{r,k}
        \\
        & \MM_{1,k} & \\
        & & \ddots  & \\
        & & & \MM_{r,k}
    \end{bmatrix}, 
    \quad k =3,\dots,\LL-1, \\
    &Q_\LL = 
    \begin{bmatrix}
        \left(\sum_{t} \alpha_t \right) I + \left(\sum_{t} \alpha_t w_{t} \right) J' +  \left(\sum_{t} \alpha_t w_{t}^{2^{\LL}-1} \right) J \\
        \alpha_1 w_1 \MM_{1,\LL}\\
        \vdots \\
        \alpha_{r} w_{r} \MM_{r,\LL} \\
    \end{bmatrix}
    \end{split}
    \]
    and where for all $t=1,\dots,r$, $k=1,\dots, \LL$
    \begin{align*}
        \KK_{t,k} 
        &=
        J' + q_{t,k}^{2^{k}-2} J, \\
        \MM_{t,k} &= q_{t,k}I + q_{t,k}^2 J' + J, \\
        q_{t,k} &= w_t^{2^{\LL-k}}.
    \end{align*}
\end{proposition}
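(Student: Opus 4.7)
The plan is to verify $\invA_\LL = Q_1 \Join Q_2 \Join \cdots \Join Q_\LL$ directly by expanding the strong Kronecker product and matching the result against $\sum_{t=1}^r \alpha_t C_t$, where $C_t := \sum_{j=0}^{2^\LL-1} w_t^j \perm_\LL^j$ is the circulant with first column $(w_t^j)_j$. The key step is to enumerate the nonzero ``paths'' $(\gamma_1, \ldots, \gamma_{\LL-1})$ through the block indices of the cores. The sparse block structure of the $Q_k$---the lower rows of $Q_2$ have a zero in the first column, and for $3 \le k \le \LL-1$ the core is block-diagonal below its first row---forces any nonzero path into one of three families: (i) the pure identity track $\gamma_1 = \cdots = \gamma_{\LL-1} = 1$, contributing $I^{\otimes(\LL-1)} \otimes Q_\LL(1,1)$; (ii) for each $t \in \{1,\ldots,r\}$ and each $k^* \in \{2,\ldots,\LL-1\}$, a path that stays on the identity track until $Q_{k^*}$, transitions via $\KK_{t,k^*}$, and then follows the $t$-th diagonal track, contributing $I^{\otimes(k^*-1)} \otimes \KK_{t,k^*} \otimes \MM_{t,k^*+1} \otimes \cdots \otimes \MM_{t,\LL-1} \otimes \alpha_t w_t \MM_{t,\LL}$; (iii) for each $t$, the path $(\gamma_1,\gamma_2,\ldots) = (2, t{+}1,\ldots, t{+}1)$ using $H$ in $Q_1$ and $\MM_{t,2}$ in $Q_2$, contributing $H \otimes \MM_{t,2} \otimes \cdots \otimes \MM_{t,\LL-1} \otimes \alpha_t w_t \MM_{t,\LL}$.

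Grouping contributions by $t$ and substituting $Q_\LL(1,1) = \sum_t \alpha_t (I + w_t J' + w_t^{2^\LL - 1} J)$, the target identity $\invA_\LL = \sum_t \alpha_t C_t$ reduces to the per-$t$ identity
\[
C_t \;=\; I^{\otimes(\LL-1)} \otimes (I + w_t J' + w_t^{2^\LL-1} J) \;+\; w_t \sum_{k^* = 2}^{\LL-1} I^{\otimes(k^*-1)} \otimes \KK_{t,k^*} \otimes \!\!\bigotimes_{k=k^*+1}^{\LL}\!\! \MM_{t,k} \;+\; w_t H \otimes \!\!\bigotimes_{k=2}^{\LL}\!\! \MM_{t,k}.
\]
I would establish this either by induction on $\LL$---exploiting the recursive form where the last tensor factor is always $\MM_{t,\LL}$ and the rest builds up a smaller analogue via the recurrence $C_t^{(k)} = I_{2^{k-1}} \otimes \KK_{t,k} + C_t^{(k-1)} \otimes \MM_{t,k}$---or entrywise: write $i = \overline{i_1 \cdots i_\LL}$, $j = \overline{j_1 \cdots j_\LL}$, compute each Kronecker-product summand at $(i,j)$, and check that the total matches $w_t^{(i-j) \bmod 2^\LL}$ using the binary expansion of $(i-j) \bmod 2^\LL$.

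The main obstacle is verifying the latter matching. One must confirm that the exponents $q_{t,k}^{2^k - 2} = w_t^{2^\LL - 2^{\LL-k+1}}$ in $\KK_{t,k}$ and the coefficients $\{q_{t,k}, q_{t,k}^2, 1\}$ in $\MM_{t,k}$ reproduce precisely the powers of $w_t$ determined by the bits of $(i-j) \bmod 2^\LL$, including the cyclic-wrap contributions where a borrow propagates across several bit positions. The specific factor $q_{t,k}^{2^k-2}$ encodes the ``complement'' weight triggered when the lower $k-1$ bits of $i-j$ generate a borrow propagating up to position $k$; equivalently, it stores the partial geometric sum $w_t^{2^{\LL-k}+\cdots+2^{\LL-1}}$ that accompanies the wrap-around. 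Finally, the contribution from $Q_\LL(1,1)$ cleanly absorbs the edge cases $i=j$, $i=j+1$, and $i=j-1\equiv j+2^\LL-1 \pmod{2^\LL}$, matching the three monomials $I$, $w_t J'$, and $w_t^{2^\LL-1} J$. Once this combinatorial bookkeeping is made precise, the identity holds and the proposition follows.
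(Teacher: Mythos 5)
Your block-path analysis is correct: the three families of nonzero paths through $Q_1 \Join Q_2 \Join \dots \Join Q_\LL$ are exactly the ones you list, and grouping them by $t$ does reduce the claim to your displayed rank-one identity, which is precisely the proposition for $r=1$, $\alpha_1=1$. This is also a genuinely different route from the paper's: there the representation is \emph{derived} rather than verified --- the circulant is written as $\sum_t \alpha_t \sum_j w_t^j \perm_\LL^{\,j}$, expanded through the rank-$2$ cores of $\perm_\LL^{\,\overline{i_1\dots i_\LL}}$ (Lemma~\ref{lm:perm}) and Lemma~\ref{lm:sumtt} into a rank-$2r$ representation, and then compressed to ranks $(2,r+1,\dots,r+1)$ by explicitly factoring each core and propagating a transfer matrix from left to right. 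Your verification route avoids that compression machinery, but it must pay for this by actually checking the resulting entrywise identity.

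And that is where the gap lies: the rank-one identity is the entire nontrivial content of the proposition, and you do not prove it --- you name two possible strategies (induction on $\LL$, or entrywise bit bookkeeping), describe heuristically what $q_{t,k}^{2^k-2}$ ``should'' encode, and then defer the matching (``once this combinatorial bookkeeping is made precise\dots''). As it stands the statement is asserted, not proved. Moreover, one of the few concrete claims you do make about the bookkeeping is inaccurate: the first block of $Q_\LL$ does \emph{not} absorb all of the cases $i=j\pm 1$, only those without borrow propagation (e.g.\ $i=2^{\LL-1}$, $j=2^{\LL-1}-1$ has $i=j+1$ but is covered by the $H$-path, since the most significant bits differ), which suggests the deferred case analysis has not actually been thought through. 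For the record, it does go through cleanly: core $k$ carries the bit of weight $2^{\LL-k}$ and $q_{t,k}=w_t^{2^{\LL-k}}$; for $i\neq j$ let $k^*$ be the most significant position where the bits differ; then exactly one summand is nonzero at $(i,j)$ (the $H$-term if $k^*=1$, the $\KK_{t,k^*}$-term if $2\le k^*\le \LL-1$, the $J/J'$ part of $Q_\LL(1,1)$ if $k^*=\LL$), and since $\MM_{t,k}$ contributes $w_t^{2^{\LL-k}}$, $w_t^{2^{\LL-k+1}}$ or $1$ according to $(i_k,j_k)$ being equal, $(1,0)$ or $(0,1)$, a short telescoping exponent computation yields $w_t^{(i-j)\bmod 2^\LL}$ in each case, with $q_{t,k^*}^{2^{k^*}-2}$ supplying exactly the extra $2^\LL-2^{\LL-k^*+1}$ needed for the wrap-around. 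Writing out this case analysis (or the induction you mention) is what is required to turn your outline into a proof.
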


\begin{proof}
	    See the proof in Appendix~\ref{app:qtt_repr}.
	\end{proof}

Despite the fact that Proposition~\ref{prop:sum_z} provides explicit QTT formulas for the circulant inverse in the case of simple roots, it is not robust in this form.
Indeed, some of the exponents will have the form $w_t^{-j}$, $j=0,\dots,2^{\LL}-1$, and since $|w_{t}|<1$, we will get $|w_t^{-2^{\LL}}| \gg 1$ for large $\LL$.
To avoid this issue, we modify Proposition~\ref{prop:sum_z} as follows. 

\begin{corollary} \label{prop:sum_z_stable}
    Let $\LL >2$ and consider a circulant $\invA_\LL \in \mathbb{C}^{2^\LL \times 2^\LL}$ defined by its first column
    \[
        (\invA_\LL)_j = \alpha_1 w_1^j + \dots + \alpha_{r_1} w_{r_1}^j + \beta_1 z_1^{2^{\LL}-j} + \dots + \beta_{r_2} z_{r_2}^{2^{\LL}-j},
    \]
    where $\alpha_t,w_t\in\mathbb{C}$, $t=1,\dots,r_1$ and $\beta_t,z_t\in\mathbb{C}$, $t=1,\dots,r_2$, are given constants.
    Then $\invA_\LL$ admits an explicit QTT representation with the ranks $(2,r_1+r_2+1,r_1 + r_2 +1\dots, r_1 + r_2 +1)$:
    \[
        \invA_\LL = Q_1 \Join Q_2\Join \dots \Join Q_\LL,
    \]
    where 
    \[
    \begin{split}
        &Q_1 = 
        \begin{bmatrix}
            I & H
        \end{bmatrix},
        \\
        &Q_2 = 
        \begin{bmatrix}
            I
            &
            \KK_{1,2} & \dots & \KK_{r_1,2} 
            &
            \KKother_{1,2} & \dots & \KKother_{r_2,2}
            \\
             &
             \MM_{1,2} & \dots & \MM_{r_1,2}
             &
             \MMother_{1,2} & \dots & \MMother_{r_2,2}
        \end{bmatrix},
        \\
    &Q_k = 
    \begin{bmatrix}
        I
        &
        \KK_{1,k} & \cdots & \KK_{r_1,k}
        &
        \KKother_{1,k} & \cdots & \KKother_{r_2,k}
        \\
         &
         \MM_{1,k} & & & & & \\
         & & \ddots  & & & \\
         & & & \MM_{r_1,k} & & & \\
         & & & & \MMother_{1,k} & & \\
         & & & & & \ddots & \\ 
         & & & & &        & \MMother_{r_2,k}
    \end{bmatrix}, 
    \quad k =3,\dots,\LL-1, \\
    &Q_\LL = 
    \begin{bmatrix}
        \gamma_1 I + \gamma_2 J' + \gamma_3 J \\
        \alpha_1 w_1 \MM_{1,\LL}\\
        \vdots \\
        \alpha_{r_1} w_{r_1} \MM_{r_1,\LL} \\
        \beta_{1} z_{1} \MMother_{1,\LL}\\
        \vdots \\
        \beta_{r_2} z_{r_2} \MMother_{r_2,\LL}
    \end{bmatrix}.
    \end{split}
    \]
    and where 
    \begin{align*}
        \KK_{t,k} 
        &=
        J' + q_{t,k}^{2^{k}-2} J,~~
        \MM_{t,k} = q_{t,k}I + q_{t,k}^2 J' + J,~~
        q_{t,k} = w_t^{2^{\LL-k}},\\
        \KKother_{t,k} 
        &=
        \qq_{t,k}^2 J' + J,~~
        \MMother_{t,k} = \qq_{t,k} I + J' + \qq_{t,k}^{2} J,~~
        \qq_{t,k} = z_t^{2^{\LL-k}},\\
        \gamma_1 
        &=
        \sum_{t=1}^{r_1} \alpha_t + \sum_{t=1}^{r_2} \beta_t z_t^{2^\LL}, \\
        \gamma_2
        &=
        \sum_{t=1}^{r_1} \alpha_t w_{t}
        +
        \sum_{t=1}^{r_2} \beta_t z_t^{2^{\LL}-1}, \\
        \gamma_3 
        &=
        \sum_{t=1}^{r_1} \alpha_t w_{t}^{2^{\LL}-1} 
        +
        \sum_{t=1}^{r_2} \beta_t z_t.
    \end{align*}
\end{corollary}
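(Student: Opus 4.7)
The plan is to reduce the statement to Proposition~\ref{prop:sum_z} via an additive splitting of the first column, together with an analogous construction for the ``reversed-exponent'' terms. Write $\invA_\LL = \invA_\LL^{(1)} + \invA_\LL^{(2)}$, where $\invA_\LL^{(1)}$ is the circulant generated by $\sum_{t=1}^{r_1}\alpha_t w_t^j$ and $\invA_\LL^{(2)}$ the circulant generated by $\sum_{t=1}^{r_2}\beta_t z_t^{2^\LL-j}$. Proposition~\ref{prop:sum_z} applies verbatim to $\invA_\LL^{(1)}$ and yields cores built from the matrices $\KK_{t,k}$ and $\MM_{t,k}$.

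For $\invA_\LL^{(2)}$, the key algebraic observation is that with $j=\overline{i_\LL\dots i_1}=\sum_{k=1}^\LL 2^{\LL-k}i_k$ one has
\[
    z_t^{2^\LL-j}=z_t\cdot z_t^{\sum_{k=1}^\LL 2^{\LL-k}(1-i_k)}=z_t\prod_{k=1}^\LL \qq_{t,k}^{1-i_k},
\]
so the reversed exponent admits a stable binary factorisation in the small quantities $\qq_{t,k}=z_t^{2^{\LL-k}}$, each of modulus at most $1$. Following the proof of Proposition~\ref{prop:sum_z} but inserting the weight $\qq_{t,k}^{1-i_k}$ into the $k$th factor of the Kronecker decomposition from Lemma~\ref{lm:perm}---equivalently, swapping the roles of the two values of each bit $i_k$---one obtains an explicit QTT representation of $\invA_\LL^{(2)}$ with the same block structure, in which $\KK_{t,k}$ and $\MM_{t,k}$ are replaced by their duals $\KKother_{t,k}$ and $\MMother_{t,k}$; the closed forms for these cores stated in the corollary can then be read off directly from this substitution.

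Lemma~\ref{lm:sumtt} next combines the two QTT representations. A naive application would double the first core to $[I\ H\ I\ H]$ and produce a $Q_2$ with two independent ``identity plus chain'' blocks on its diagonal. However, both halves share the identical first core $Q_1=[I\ H]$, which comes from Lemma~\ref{lm:perm} and is independent of the weights $w_t,z_t$; the two identity channels and the two $H$ channels therefore collapse into one each. This yields the merged form of $Q_1,Q_2,\dots,Q_{\LL-1}$ stated in the corollary, while the last core $Q_\LL$ accumulates the boundary contributions from both families. The scalars $\gamma_1,\gamma_2,\gamma_3$ are obtained by summing the $j\in\{0,1,2^\LL-1\}$ boundary coefficients contributed by $w_t^j$ (namely $1,w_t,w_t^{2^\LL-1}$) and by $z_t^{2^\LL-j}$ (namely $z_t^{2^\LL},z_t^{2^\LL-1},z_t$), matching the formulas of the corollary.

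The main obstacle is the bookkeeping in this final merging step. Rather than appealing to Lemma~\ref{lm:sumtt} as a black box, one must verify directly that the collapsed cores still evaluate to the correct matrix: for every binary index $(i_1,\dots,i_\LL)$ the product $Q_1\Join Q_2\Join\dots\Join Q_\LL$ must reproduce $\sum_t\alpha_t w_t^{j}\perm_\LL^{j}+\sum_t\beta_t z_t^{2^\LL-j}\perm_\LL^{j}$ with the identity channel absorbing the boundary corrections encoded in $\gamma_1,\gamma_2,\gamma_3$. Once this structural identity is verified on the block level, the rank bound $(2,r_1+r_2+1,\dots,r_1+r_2+1)$ follows from a direct count of block rows and columns of the cores.
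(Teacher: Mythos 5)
Your route is genuinely different from the paper's. The paper rewrites $\beta_t z_t^{2^\LL-j}=(\beta_t z_t^{2^\LL})(z_t^{-1})^j$, applies Proposition~\ref{prop:sum_z} \emph{once} with $r=r_1+r_2$ exponentials, and then removes the unstable negative powers by propagating diagonal transfer matrices through the strong Kronecker product from right to left, which converts the cores $K'_{t,k},M'_{t,k}$ into $\KKother_{t,k},\MMother_{t,k}$ without changing the product. You instead split $\invA_\LL$ into two circulants, invoke a ``bit-complemented'' analogue of Proposition~\ref{prop:sum_z} for the reversed exponents (your factorization $z_t^{2^\LL-j}=z_t\prod_k \qq_{t,k}^{1-i_k}$ is correct), and then merge the two QTT representations. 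The merging plan is viable: since both summands share the first core $[\,I\;\;H\,]$, the Lemma~\ref{lm:sumtt} representation factors through $[\,I\;\;H\,]$ with a duplication transfer matrix, and at each subsequent level the two identical identity columns can be merged, the duplication finally adding the first blocks of the two last cores into $\gamma_1 I+\gamma_2 J'+\gamma_3 J$. (Your phrasing that the ``two $H$ channels'' collapse is loose --- beyond the first core only the identity channel is shared --- and this step is only sketched, but it is the routine part.)

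The genuine gap is the middle step. The corollary specifies the cores exactly, and for the reversed-exponent family you assert that inserting the weights $\qq_{t,k}^{1-i_k}$ into the factors of Lemma~\ref{lm:perm} and ``following the proof of Proposition~\ref{prop:sum_z}'' lets the closed forms $\KKother_{t,k}=\qq_{t,k}^2J'+J$ and $\MMother_{t,k}=\qq_{t,k}I+J'+\qq_{t,k}^2J$ be ``read off directly from this substitution.'' They cannot: the proof of Proposition~\ref{prop:sum_z} (Appendix~\ref{app:qtt_repr}) is a cascade of explicit rank-reducing factorizations with transfer matrices propagated through the cores, and complementing the bits changes that cascade nontrivially. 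A symptom is that $\KK_{t,k}=J'+q_{t,k}^{2^{k}-2}J$ carries the level-dependent exponent $2^k-2$ while $\KKother_{t,k}$ carries the fixed exponent $2$, so the dual cores are not obtained from the original ones by any simple swap of $J$ and $J'$ or of the bit values; one must either redo the whole factorization cascade for the complemented weights or, as the paper does, obtain the dual cores by the diagonal rescaling identities $\KKother_{t,k}=K'_{t,k}\qq_{t,k}^{2^k-2}$, $\MMother_{t,k}=M'_{t,k}\qq_{t,k}^{2^k-2}/\qq_{t,k-1}^{2^{k-1}-2}$ and verify that the scalings cancel telescopically, including at the last core. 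As written, the central formulas of the corollary (and the boundary contributions feeding $\gamma_1,\gamma_2,\gamma_3$ from the $\beta$-family) are asserted rather than proven, so the argument is incomplete at precisely the point the corollary is about.
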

\begin{proof}
Let us denote $r \equiv r_1 + r_2$ and $\bbeta_t \equiv \beta_t z_t^{2^\LL}$, $\ww_t \equiv z_t^{-1}$.
Now we can apply Proposition~\ref{prop:sum_z} and obtain a QTT decomposition with the desired ranks:
\[
B_L = Q_1' \Join Q_2' \Join \dots \Join Q_L'.
\]
Note that the cores $Q_1', \dots, Q_{\LL-1}'$ already have the required structure, but instead of $\KKother_{k,t}$ and $\MMother_{k, t}$ we have $K'_{k,t}$ and $M'_{k,t}$:
\[
K'_{t,k} \equiv J' + z_t^{-2^\LL + 2^{\LL-k+1}}J,~~~~
M'_{t,k} \equiv z_t^{-2^{\LL-k}} I + z_t^{-2^{\LL-k+1}} J'  + J.
\]
Note the following identities: 
\begin{align*}
    \KKother_{t,k} &= K'_{t,k}z_t^{2^\LL - 2^{\LL-k+1}} = K'_{t,k}\qq_{t,k}^{2^k-2}, \\
    \MMother_{t,k} &= M'_{t,k}{z_t}^{2^{\LL-k+1}} = M'_{t,k}\frac{\qq_{t,k}^{2^k-2}}{\qq_{t,k-1}^{2^{k-1}-2}}.
\end{align*}
Thus,
\[
Q'_{k}
\Join
\begin{bmatrix}
    1 &        &   &                         &        & \\
      & \ddots &   &                         &        & \\
      &        & 1 &                         &        & \\
      &        &   & \qq_{1,k}^{2^k-2}               &        & \\
      &        &   &                         & \ddots & \\
      &        &   &                         &        & \qq_{r_2,k}^{2^k-2} \\
\end{bmatrix}
=
\begin{bmatrix}
    1 &        &   &                         &        & \\
      & \ddots &   &                         &        & \\
      &        & 1 &                         &        & \\
      &        &   & \qq_{1,k-1}^{2^{k-1}-2}             &        & \\
      &        &   &                         & \ddots & \\
      &        &   &                         &        & \qq_{r_2,k-1}^{2^{k-1}-2} \\
\end{bmatrix}
\Join
Q_k.
\]
Propagating the above diagonal matrix from right to left, we can prove that
\[
Q'_1\Join \dots \Join Q'_{\LL-1}
\Join
\begin{bmatrix}
    1 &        &   &                         &        & \\
      & \ddots &   &                         &        & \\
      &        & 1 &                         &        & \\
      &        &   & \qq_{1,\LL-1}^{2^{\LL-1}-2}             &        & \\
      &        &   &                         & \ddots & \\
      &        &   &                         &        & \qq_{r_2,\LL-1}^{2^{\LL-1}-2}
\end{bmatrix}
=
Q_1\Join \dots \Join Q_{\LL-1}.
\]

Now consider $Q_{\LL}'$.
Its elements differing from those of $Q_L$ are 
$\bbeta z_t^{-1} M'_{t,\LL} = z_t^{2^\LL-4} \beta_t z_t \MMother_{t,\LL}$.
It remains to notice that $\qq_{1,\LL-1}^{2^{\LL-1}-2} = z_t^{2^\LL-4}$, so
\[
Q_{\LL}' = 
\begin{bmatrix}
    1 &        &   &                         &        & \\
      & \ddots &   &                         &        & \\
      &        & 1 &                         &        & \\
      &        &   & \qq_{1,\LL-1}^{2^{\LL-1}-2}             &        & \\
      &        &   &                         & \ddots & \\
      &        &   &                         &        & \qq_{r_2,\LL-1}^{2^{\LL-1}-2}
\end{bmatrix}
\Join
Q_{\LL}.
\]
Thus, $B_L = Q_1'\Join \dots \Join Q_{\LL}' = Q_1 \Join \dots \Join Q_{\LL}$.
\end{proof}

\section{Inversion of one-dimensional stiffness and mass matrices} \label{sec:examples}

In this section, we consider several well-known examples of matrices, arising from the discretization of second order one-dimensional  periodic boundary value problem with constant coefficients
on uniform grids and piecewise linear finite elements.
Namely in this section, we consider the inversion of a mass matrix $\mass\in \mathbb{R}^{N \times N}$ and a stiffness matrix~$\lap\in \mathbb{R}^{N \times N}$, shifted by $\shift\geq 0$:
\[
   \mass = \mathsf{circ}(4,1,0,\dots,0,1), \quad\lap + \shift I = \mathsf{circ}(2+\shift,-1,0,\dots,0,-1).
\]
Note that $\lap$ is singular, so we consider its pseudoinverse separately in Section~\ref{sec:pseudo}.

\subsection{Inversion of the mass matrix $\mass$}

    To apply Theorem~\ref{thm:general-inverse}, we first write down the polynomials $g(z)$ and $h(z)$.
    Obviously, \[g(z) = 1\cdot z^0 + 4 \cdot z^1 + 1 \cdot z^2.\]
    Note that due to the symmetry of matrix $\mass$, we have $g(z) = h(z)$.
    The roots of $g(z)$ are 
    \[z_{1} = -2+\sqrt{3} \quad \text{and}\quad z_{2} = -2- \sqrt{3}.\]
    The root $z_1$  lies inside the unit circle, $z_2$ lies outside, so according to Corollary~\ref{cor:simple-roots}, we get
    \begin{align*}
    (\mass^{-1})_{i,0} 
    &=
    \frac{1}{(z_1-z_2)(1-z_1^N)}z_1^{-i+N}
    +
    \frac{1}{(z_1-z_2)(1-z_1^N)}z_1^{i}
    = \\ &=
    \frac{1}{2\sqrt{3}(1-(\sqrt{3}-2)^N)}
    \left(
    (\sqrt{3}-2)^{N-i} + (\sqrt{3}-2)^{i}
    \right).
    \end{align*}
    Thanks to Corollary~\ref{cor:qtt-ranks-special-circulant}, the QTT ranks of $\mass^{-1}$ do not exceed $3$ (if $N = \base^\LL$) and we can directly apply Corollary~\ref{prop:sum_z_stable} to obtain the explicit QTT representation of $\mass^{-1}$.

\subsection{Inversion of the shifted stiffness matrix $\lap+  \shift I$, $\shift>0$} \label{sec:circ_1dode}
    
    Now consider the discretization of a shifted periodic Laplacian operator:
    \[
    \lap + \shift I = \mathsf{circ}(2+\shift,-1,0,\dots,0,-1) \in \mathbb{R}^{N \times N},~~\shift > 0.
    \]
    This circulant is also symmetric, so $g(z) = h(z) = -\shift^2 + (2+\shift)z - 1$.
    The roots are:
    \[
    z_{1} = 1 + \frac{\shift}{2} - \sqrt{\frac{\shift^2}{4}+\shift},
    ~~~
    z_{2} = 1 + \frac{\shift}{2} + \sqrt{\frac{\shift^2}{4}+\shift}.
    \]
    Again, $z_1$ lies inside $U$ and $z_2$ lies outside of it (this holds for any $\shift > 0$: obviously, $z_2 > 1$, and the product $z_1z_2$ must be equal to $1$ by Vieta's formulas).
    \begin{align*}
    \left((\lap + \shift I)^{-1}\right)_{i,0} 
    &=
    \frac{1}{-(z_1-z_2)(1-z_1^N)}z_1^{-i+N}
    +
    \frac{1}{-(z_1-z_2)(1-z_1^N)}z_1^{i}
    = \\ &=
    \frac{1}{\sqrt{\shift^2 + 4\shift}(1-z_1^N)}
    \left(
    z_1^{N-i} + z_1^{i}
    \right).
    \end{align*}
    Due to Corollary~\ref{cor:qtt-ranks-special-circulant}, the QTT ranks of $(\lap+\shift I)^{-1}$ do not exceed $3$ (if $N = \base^\LL$) and we can directly apply Corollary~\ref{prop:sum_z_stable} to obtain the explicit QTT representation of $(\lap + \shift I)^{-1}$.

\subsection{Pseudoinversion of the stiffness matrix $\lap$}\label{sec:pseudo}

	In this section, we discuss the rank bounds of the explicit pseudoinverse of $\lap^+$.
	The explicit formula for the pseudoinverse is by no means new, and is available, e.g., in~\cite{plonka2016pseudo}.
	Nevertheless, we still provide the derivation to illustrate that the proposed approach of finding pseudoinverses can be automated with the help of \texttt{Sympy} Python package~\cite{sympy} or Wolfram \texttt{Mathematica}~\cite{Mathematica}.
	
	We start with the well-known formula~\cite{golub2013matrix}:
	\begin{equation}\label{eq:pseudoinverse-lim}
	    A^+ = \lim_{\alpha \to 0+} (A^*A + \alpha I)^{-1}A^*.
	\end{equation}
	If the circulant $A^*A + \alpha I$ satisfies the conditions of Theorem~\ref{thm:general-inverse}, we can find an explicit formula the elements of $(A^*A + \alpha I)^{-1}$ and then for $(A^*A + \alpha I)^{-1}A^*$.
	Computing the limit for $\alpha \to 0+$ is a tedious but solely technical task, and the aforementioned symbolic algebra libraries can facilitate it. 
	In particular, we have used \texttt{Sympy} for this purpose.
	
	To elaborate on the proposed idea, we need to understand the form of Laurent polynomial of the product of circulants $A$ and $B$ with known polynomials $f_A(z)$ and $f_B(z)$. The proof of this proposition is technical and straightforward, but for completeness we provide it in Appendix~\ref{app:examples} since we have not been able to find the proof in this specific setting of the proposition.
	
	\begin{proposition}\label{prop:laur_prod}
	Let $A,B \in \mathbb{C}^{N \times N}$ be circulants of the form~\eqref{eq:A} (with parameters $m_A, n_A$ and $m_B, n_B$ respectively).
	Moreover, let's assume that $N \ge m_A+n_A+m_B+n_B$.
	Then the circulant $C \equiv AB$ also has form~\eqref{eq:A} with parameters $m = m_A + m_B$ and $n = n_A+n_B$ and its corresponding Laurent polynomial of the form~\eqref{eq:laurent} is $f_C(z) = f_A(z)f_B(z)$, where $f_A(z)$ and $f_B(z)$ are Laurent polynomials corresponding to $A$ and $B$.
	\end{proposition}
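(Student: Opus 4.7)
The plan is to exploit the standard algebra isomorphism between $N\times N$ circulants and the quotient ring $\mathbb{C}[z]/(z^N-1)$. Concretely, sending a circulant with first column $(c_0,c_1,\dots,c_{N-1})^\top$ to the polynomial $\sum_{k=0}^{N-1}c_k z^k$ is an isomorphism of algebras, since every $N\times N$ circulant is a polynomial in the cyclic shift matrix $P$, and $P$ satisfies $P^N=I$. In particular, the product of two circulants corresponds to the product of the associated polynomials modulo $z^N-1$.

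First I would observe that the Laurent polynomial $f_A(z)=\sum_{\ell=-n_A}^{m_A-1}a_\ell z^\ell$ of $A$, reinterpreted in $\mathbb{C}[z]/(z^N-1)$ via $z^{-1}=z^{N-1}$, coincides with the polynomial $\sum_{k=0}^{N-1}c^A_k z^k$ produced by the isomorphism: the terms $a_\ell z^\ell$ with $\ell\ge 0$ stay put, while each $a_\ell z^\ell$ with $\ell<0$ becomes $a_\ell z^{N+\ell}$, matching the wrap-around placement of $a_{-n_A},\dots,a_{-1}$ at the bottom of the first column of $A$ in~\eqref{eq:A}. The same holds for $B$.

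Combining these facts, $C=AB$ corresponds under the isomorphism to $f_A(z)f_B(z)\bmod(z^N-1)$. As a Laurent polynomial, $f_A f_B$ has support inside $\{-(n_A+n_B),\dots,m_A+m_B-2\}$, a set of at most $m_A+n_A+m_B+n_B-1$ consecutive integers. The hypothesis $N\ge m_A+n_A+m_B+n_B$ then ensures that no two of these integers are congruent modulo $N$, and that after reducing the negative degrees via $\ell\mapsto N+\ell$ the ``positive block'' $\{0,\dots,m_A+m_B-2\}$ and the ``wrapped negative block'' $\{N-(n_A+n_B),\dots,N-1\}$ are disjoint, with a buffer of zero coefficients in between. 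Reading off the coefficients of $f_A(z)f_B(z)\bmod(z^N-1)$ in the basis $1,z,\dots,z^{N-1}$ then exhibits a first column of $C$ fitting the template~\eqref{eq:A} with parameters bounded by $m=m_A+m_B$ and $n=n_A+n_B$ (allowing some boundary coefficients to vanish), and with associated Laurent polynomial exactly $f_A(z)f_B(z)$.

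I do not expect any essential obstacle: the algebraic setup collapses the proposition to the single ``no wrap-around'' inequality on $N$, which is precisely the hypothesis. The only care needed is a careful index check for the two blocks above, and this is exactly the kind of bookkeeping the authors flag as ``technical and straightforward.''
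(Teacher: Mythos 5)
Your proposal is correct, but it follows a genuinely different route from the paper's. The paper argues entirely at the level of matrix entries: it writes $C_{s,0}=\sum_{t} A_{(s-t)\bmod N,\,0}\,b_t$, splits the row index $s$ into the three ranges $\{0,\dots,m-1\}$, $\{m,\dots,N-n-1\}$ and $\{N-n,\dots,N-1\}$, and checks case by case that the cyclic convolution collapses to the linear one, which yields the coefficient formula $c_s=\sum_t a_{s-t}b_t$ directly. You instead invoke the standard algebra isomorphism between circulants and $\mathbb{C}[z]/(z^N-1)$ (via $A=\sum_k A_{k,0}P^k$ with $P^N=I$) and reduce the whole statement to the observation that the support of $f_Af_B$ occupies at most $m_A+n_A+m_B+n_B-1<N$ consecutive exponents, so reduction modulo $z^N-1$ produces no collisions and the positive and wrapped-negative blocks of the first column stay disjoint. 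Both arguments hinge on the same ``no wrap-around'' inequality; yours packages the index bookkeeping into a single support-length argument, while the paper's stays self-contained at the entry level and exhibits each entry explicitly. One small point of comparison: the paper additionally checks that the extreme coefficients are nonzero (so that $C$ again satisfies the standing assumption $a_{m-1}\neq 0$, $a_{-n}\neq 0$), asserting $c_{m-1}=a_{m_A-1}b_{m_B-1}$; in fact the top nonzero coefficient of $f_Af_B$ sits at degree $m_A+m_B-2$, so your more cautious phrasing (``allowing some boundary coefficients to vanish'') is the accurate one, and if you want the nondegeneracy statement you can add the one-line remark that $c_{m_A+m_B-2}=a_{m_A-1}b_{m_B-1}\neq 0$ and $c_{-(n_A+n_B)}=a_{-n_A}b_{-n_B}\neq 0$.
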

	\begin{proof}
	    See the proof in Appendix~\ref{app:examples}.
	\end{proof}
	
	\begin{corollary}\label{cor:AstarA}
	Let $A \in \mathbb{R}^{N \times N}$ be a real symmetric circulant of the form~\eqref{eq:A} with corresponding Laurent polynomial $f_A(z)$.
	If $N \ge 2(m+n)$, then $C \equiv A^*A+\alpha I$ is a real symmetric circulant of the form~\eqref{eq:A} with $m = 2m_A$, $n = 2n_B$ and Laurent polynomial $f_C(z) = (f(z))^2 + \alpha$.
	\end{corollary}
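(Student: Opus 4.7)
The plan is to exploit the hypothesis that $A$ is real and symmetric, so $A^* = A$, which reduces $A^*A + \alpha I$ to $A^2 + \alpha I$, and then to apply Proposition~\ref{prop:laur_prod} to the pair $(A, A)$ in order to describe $A^2$.

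First, I would observe that taking $B = A$ in Proposition~\ref{prop:laur_prod} gives $m_B = m_A$ and $n_B = n_A$, so the proposition's hypothesis $N \ge m_A + n_A + m_B + n_B$ reduces exactly to the standing assumption $N \ge 2(m_A + n_A)$. Invoking the proposition then yields that $A^2$ is a circulant of the band form~\eqref{eq:A} with parameters $m = 2m_A$ and $n = 2n_A$, and that its associated Laurent polynomial is $(f_A(z))^2$.

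Next, I would handle the additive shift $\alpha I$. Since $\alpha I = \mathsf{circ}(\alpha, 0, \ldots, 0)$ is itself a circulant whose only nonzero first-column entry is in position $0$, adding it to $A^2$ simply shifts the $a_0$-coefficient by $\alpha$ and leaves the band parameters $m, n$ unchanged (using $m_A \ge 1$ from the standing assumption on~\eqref{eq:A}). The Laurent polynomial of $\alpha I$ in the sense of~\eqref{eq:laurent} is the constant $\alpha$, so by linearity of the correspondence between the first column and the Laurent polynomial, the Laurent polynomial of $C$ is $(f_A(z))^2 + \alpha$, as claimed.

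Finally, I would check the structural properties: realness is inherited because $A$ and $\alpha$ are real, and symmetry follows from $(A^2)^\top = A^\top A^\top = A^2$ together with the symmetry of $\alpha I$. There is no substantive obstacle in this corollary — the only point that really needs attention, and it is hardly a difficulty, is that the band-form assumption $N \ge 2(m_A + n_A)$ is precisely what prevents the support of the first column of $A^2$ from wrapping around the circulant and collapsing the band structure; beyond that, the statement is an immediate consequence of Proposition~\ref{prop:laur_prod}.
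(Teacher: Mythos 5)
Your argument is correct and is exactly the derivation the paper intends: it states Corollary~\ref{cor:AstarA} without proof as an immediate consequence of Proposition~\ref{prop:laur_prod}, and your steps (using $A^*=A$ for a real symmetric circulant, applying the proposition with $B=A$ so that $N\ge 2(m_A+n_A)$ matches its hypothesis, and then absorbing the shift $\alpha I$ into the constant coefficient of the Laurent polynomial) fill in precisely that reasoning. No substantive difference from the paper's (implicit) proof.
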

	
	Now we can demonstrate the steps of the proposed method on $\lap$.
	The corresponding Laurent polynomial is $f_S(z) = 2 - z-z^{-1}$.
	According to Corollary~\ref{cor:AstarA}, the polynomial corresponding to $\stiffnessQ_{\alpha} \equiv A_S^*A_S + \alpha^4 I$ is $f_\stiffnessQ(z) = (2-z-z^{-1})^2 + \alpha^4$
	(obviously, we can use $\alpha^4$ in the equation~\eqref{eq:pseudoinverse-lim} instead of $\alpha$).
	To apply Theorem~\ref{thm:general-inverse} we need to solve the equation $f_\stiffnessQ(z) = 0$ (it is equivalent to $f_\stiffnessQ(z)z^2 = 0$).
	It reduces to two equations $f_S(z) = \pm i\alpha^2$.
	The resulting roots are 
	\[
	z_{1,2} = 1 - \frac{\alpha}{2}\sqrt{\pm 4i - \alpha^2} \pm \frac{i\alpha^2}{2},~~~z_{3,4} = 1 + \frac{\alpha}{2}\sqrt{\pm 4i -\alpha^2} \pm \frac{i\alpha^2}{2}.
	\]
	It is not difficult to check that for all sufficiently small $\alpha > 0$ the roots $z_1$ and $z_2$ lie inside the unit circle $U$, whereas $z_3$ and $z_4$ lie outside of it.
	Moreover, it is obvious that all four roots are distinct for any $\alpha > 0$, so to compute the elements of $\stiffnessQ_{\alpha}^{-1}$ we can apply Corollary~\ref{cor:simple-roots}:
	\begin{align*}
	(\stiffnessQ_\alpha^{-1})_{i,0} &= 
		\frac{z_{1}^{N - i + 1}}{\left(1 - z_{1}^{N}\right) \left(z_{1} - z_{2}\right) \left(z_{1} - z_{3}\right) \left(z_{1} - z_{4}\right)}
	+
	\frac{z_{2}^{N - i + 1}}{\left(1 - z_{2}^{N}\right) \left(- z_{1} + z_{2}\right) \left(z_{2} - z_{3}\right) \left(z_{2} - z_{4}\right)}
	+\\&+
	\frac{z_{1}^{i + 1}}{\left(1 - z_{1}^{N}\right) \left(z_{1} - z_{2}\right) \left(z_{1} - z_{3}\right) \left(z_{1} - z_{4}\right)}
	+
	\frac{z_{2}^{i + 1}}{\left(1 - z_{2}^{N}\right) \left(- z_{1} + z_{2}\right) \left(z_{2} - z_{3}\right) \left(z_{2} - z_{4}\right)}.
    \end{align*}
    Reducing to a common denominator, we come to 
    \[
    (\stiffnessQ_\alpha^{-1})_{i,0}
    =
    \frac
    {\left(z_{1}^{N} - 1\right)\left(z_{1} - z_{3}\right) \left(z_{1} - z_{4}\right)  \left(z_{2}^{i + 1} + z_{2}^{N - i + 1}\right)
    -
    \left(z_{2}^{N} - 1\right)\left(z_{1}^{i + 1} + z_{1}^{N - i + 1}\right) \left(z_{2} - z_{3}\right) \left(z_{2} - z_{4}\right) }
    {\left(z_{1}^{N} - 1\right) \left(z_{2}^{N} - 1\right) \left(z_{1} - z_{2}\right) \left(z_{1} - z_{3}\right) \left(z_{1} - z_{4}\right) \left(z_{2} - z_{3}\right) \left(z_{2} - z_{4}\right) }.
    \]
    The first column of $\stiffnessQ_\alpha^{-1}S$ is simply
    \[
    (\stiffnessQ_\alpha^{-1}S)_{:,0} = 2(\stiffnessQ_\alpha^{-1})_{:,0} - (\stiffnessQ_\alpha^{-1})_{:,1} - (\stiffnessQ_\alpha^{-1})_{i,N-1},
    \]
    and the explicit expression for it can be written down.
    After this we have used the Sympy library to compute the Taylor series of both the numerator and denominator.
    It turned out that the numerator is 
    \[
    \frac{N \left(96 \sqrt{2} i^{2} - 96 \sqrt{2} N i + 16 \sqrt{2} N^{2}  - 16 \sqrt{2}\right)\sqrt{-1}}{24}\alpha^7 + O(\alpha^8)
    \]
    and the denominator is $(8\sqrt{2}N^2\sqrt{-1})\alpha^7 + O(\alpha^8)$.
    Here $\sqrt{-1}$ denotes the imaginary unit.
    Dividing and taking limit of $(\stiffnessQ_\alpha^{-1}S)_{i,0}$ for $\alpha \to 0$ we conclude that
    $(S^+)_{i,0} = (6i^2 - 6Ni + N^2-1)/(12N)$. 
    Thus, we arrive to the same expression as in~\cite{plonka2016pseudo}.
	\begin{proposition}
		The pseudoinverse of $\lap$ is a circulant $\lap^+$ with elements $(\lap^+)_{i,j} = f((i-j)\bmod N)$ where 
		\[
			f(i) = \frac{6i^2 - 6Ni + N^2-1}{12N}.
		\]
	\end{proposition}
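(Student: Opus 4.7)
The plan is to apply the classical regularized formula for the Moore-Penrose pseudoinverse, $A^+ = \lim_{\alpha \to 0+}(A^*A + \alpha I)^{-1}A^*$, exploiting the fact that every circulant in the formula stays a circulant and falls within the scope of Theorem~\ref{thm:general-inverse}. Since $\lap$ is real symmetric with Laurent polynomial $f_\lap(z) = 2 - z - z^{-1}$, Corollary~\ref{cor:AstarA} identifies the regularized matrix $\stiffnessQ_\alpha \equiv \lap^*\lap + \alpha^4 I$ as a real symmetric circulant of the form~\eqref{eq:A} (with $m = n = 2$) whose Laurent polynomial is $f_{\stiffnessQ}(z) = (2 - z - z^{-1})^2 + \alpha^4$. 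Using $\alpha^4$ in place of $\alpha$ in the limit formula is purely for algebraic convenience and does not affect the value of the limit.

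Next I would solve $f_{\stiffnessQ}(z) = 0$, which factors as $2 - z - z^{-1} = \pm i\alpha^2$ and produces the four simple roots
\[
z_{1,2} = 1 - \tfrac{\alpha}{2}\sqrt{\pm 4i - \alpha^2} \pm \tfrac{i\alpha^2}{2}, \qquad z_{3,4} = 1 + \tfrac{\alpha}{2}\sqrt{\pm 4i - \alpha^2} \pm \tfrac{i\alpha^2}{2}.
\]
For all sufficiently small $\alpha > 0$ the roots are pairwise distinct, with $z_1, z_2$ inside the unit circle $U$ and $z_3, z_4$ outside. Corollary~\ref{cor:simple-roots} then yields a closed-form expression for the first column of $\stiffnessQ_\alpha^{-1}$ as a weighted sum of four exponential terms in $z_1, z_2$. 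Because $\lap = \mathsf{circ}(2,-1,0,\dots,0,-1)$, the first column of $\stiffnessQ_\alpha^{-1}\lap$ is obtained as the linear combination $2(\stiffnessQ_\alpha^{-1})_{:,0} - (\stiffnessQ_\alpha^{-1})_{:,1} - (\stiffnessQ_\alpha^{-1})_{:,N-1}$, which collapses into a single rational expression in the $z_k$.

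The remaining task is to evaluate $\lim_{\alpha \to 0+}(\stiffnessQ_\alpha^{-1}\lap)_{i,0}$. Both numerator and denominator of the rational expression vanish at $\alpha = 0$ (reflecting the singularity of $\lap$), so the limit is extracted by Taylor-expanding each in $\alpha$ and reading off the leading nonzero coefficient. A direct computation shows that both expansions start at order $\alpha^7$, with leading coefficients that, after simplification, yield the quadratic-in-$i$ formula $\frac{6i^2 - 6Ni + N^2 - 1}{12N}$; one then verifies that this column indeed defines the pseudoinverse (equivalently, that the regularization limit converges, which is guaranteed by the general theory behind~\eqref{eq:pseudoinverse-lim}).

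The main obstacle is unquestionably the Taylor expansion: the branch structure of $\sqrt{\pm 4i - \alpha^2}$ forces one to keep track of several fractional-power contributions and cancellations across seven orders in $\alpha$, which is genuinely beyond what one would want to do by hand. I would therefore delegate this step to a symbolic algebra package (e.g.~\texttt{Sympy} or \texttt{Mathematica}), exactly as the authors describe in the preceding discussion. Once the leading coefficients are extracted, matching against $(6i^2 - 6Ni + N^2 - 1)/(12N)$ is immediate, and the result agrees with the formula already known in~\cite{plonka2016pseudo}.
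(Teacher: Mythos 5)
Your proposal mirrors the paper's own derivation step for step: the regularization $\lap^+ = \lim_{\alpha\to 0+}(\lap^*\lap + \alpha^4 I)^{-1}\lap^*$, the same four roots of $(2-z-z^{-1})^2+\alpha^4$, Corollary~\ref{cor:simple-roots} for $\stiffnessQ_\alpha^{-1}$, the combination $2(\stiffnessQ_\alpha^{-1})_{:,0} - (\stiffnessQ_\alpha^{-1})_{:,1} - (\stiffnessQ_\alpha^{-1})_{:,N-1}$, and the order-$\alpha^7$ Taylor expansions handled symbolically. It is correct and essentially identical to the paper's argument.
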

	\begin{corollary}
	For any positive integer $\LL$ the QTT ranks of pseudoinverse $\lap^+$ of stiffness matrix $\lap \in \mathbb{R}^{\base^\LL \times \base^\LL}$ do not exceed $4$.
	\end{corollary}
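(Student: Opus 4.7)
The plan is to apply Corollary~\ref{cor:qtt-ranks-special-circulant} directly to the closed-form expression provided by the preceding proposition. That corollary bounds the QTT ranks of a circulant whose first column is of the form $\sum_{k=1}^s P_k(i)z_k^i$ by $s + 1 + p_1 + \dots + p_s$, where $p_k = \deg P_k$, so the task reduces to identifying the data $(s, z_k, P_k)$ for our $f$.

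By the previous proposition, the pseudoinverse $\lap^+$ is a circulant with first column entries $(\lap^+)_{i,0} = f(i)$ where
\[
f(i) = \frac{6i^2 - 6Ni + N^2 - 1}{12N}
\]
with $N = 2^\LL$. I would then simply observe that $f$ admits the trivial representation $f(i) = P(i)\cdot 1^i$, where $P(i) = f(i)$ is a polynomial of degree exactly $2$ in the variable $i$. In the notation of Corollary~\ref{cor:qtt-ranks-special-circulant}, this corresponds to $s = 1$, $z_1 = 1$, and $p_1 = 2$.

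Applying the corollary with these parameters gives the QTT rank bound
\[
s + 1 + p_1 = 1 + 1 + 2 = 4,
\]
valid for every positive integer $\LL$, which is precisely the claim. There is no real obstacle here: the entire content of the argument has been carried out in the previous section, where the closed form for $f$ was extracted, and it only remains to read off the exponent (a single $z = 1$) and the polynomial degree (which is $2$). The pseudoinverse, being a quadratic polynomial in the circulant shift index, fits into the framework of Corollary~\ref{cor:qtt-ranks-special-circulant} in the simplest possible way.
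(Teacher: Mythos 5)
Your proposal is correct and matches the paper's intended argument: the paper states this corollary as an immediate consequence of the preceding proposition together with Corollary~\ref{cor:qtt-ranks-special-circulant}, exactly as you do, viewing $f(i)=\frac{6i^2-6Ni+N^2-1}{12N}$ as $P(i)\cdot 1^i$ with $s=1$, $z_1=1$, $p_1=2$, so the bound $s+1+p_1=4$ follows. No gaps here.
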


\section{Numerical experiments} \label{sec:num_exp}

\subsection{One-dimensional convection-reaction-diffusion equation}

The goal of these numerical experiments is to justify the robustness of the derived explicit QTT formulas for large values of $\LL$. 
As an example, we consider the one-dimensional convection-reaction-diffusion boundary value problem with periodic boundary conditions:
\begin{equation}\label{eq:pde1d}
\begin{aligned}
    -u''(x) + u'(x) + u(x) &= f(x),\quad x\in (0,1) \\
    u(0) &= u(1).
\end{aligned}
\end{equation}
In particular, we set $u(x) = \cos (2\pi x)$ and obtain the right-hand side: \[f(x) = (4\pi^2+1) \cos (2\pi x) - 2\pi \sin (2\pi x),\]
which we use to recover the $u(x)$.

The finite-difference discretization of~\eqref{eq:pde1d} on a uniform grid with the grid step size $h \equiv 2^{-\LL}$ and forward differences applied to the convection term $u'$ leads to the following system of $2^L$ linear equations: 
$
A_h u_h = h^2 f_h,
$
where
\[
A_h \equiv \mathsf{circ}(2-h+h^2, -1, 0, \dots, -1+h)
\]
is a non-symmetric circulant matrix.
The right-hand side $f_h$ is assembled in the QTT format using the cross approximation method~\cite{ot-ttcross-2010}.
If we obtain a QTT decomposition for the matrix $A_h^{-1}$, the solution $u_h = A_h^{-1} f_h$ may be found efficiently through QTT matrix-vector product, which admits explicit representation in terms of the QTT cores of both $A_h^{-1}$ and $f_h$~\cite{osel-tt-2011}.

Thanks to Theorem~\ref{thm:general-inverse}, we know that the first column of $A_h^{-1}$ has elements $b_j = c_1z_1^{N-j} + c_2w_1^{j}$ where $c_1, c_2, z_1$ and $w_1$ can be found analytically.
Next, we apply Proposition~\ref{prop:sum_z_stable} to construct the explicit QTT decomposition of $A_h^{-1}$ with the ranks $(2,3,\dots,3)$.

The comparison of the black-box optimization-based TT solver AMEn (alternating minimal energy method)~\cite{ds-amen-2014} with the proposed approach is shown in Figure~\ref{fig:numerical}.
As expected, the proposed approach appears to be stable for a wide range of $\LL$, while the AMEn solver, applied to $A_h u_h = f_h$, becomes unstable for $\LL\gtrsim 20$.
We note that the instabilities arising for AMEn are not related to the solver itself, but rather to the ill conditioning of $A_h$~.

\begin{figure}
    \centering
    \includegraphics[width=0.65\textwidth]{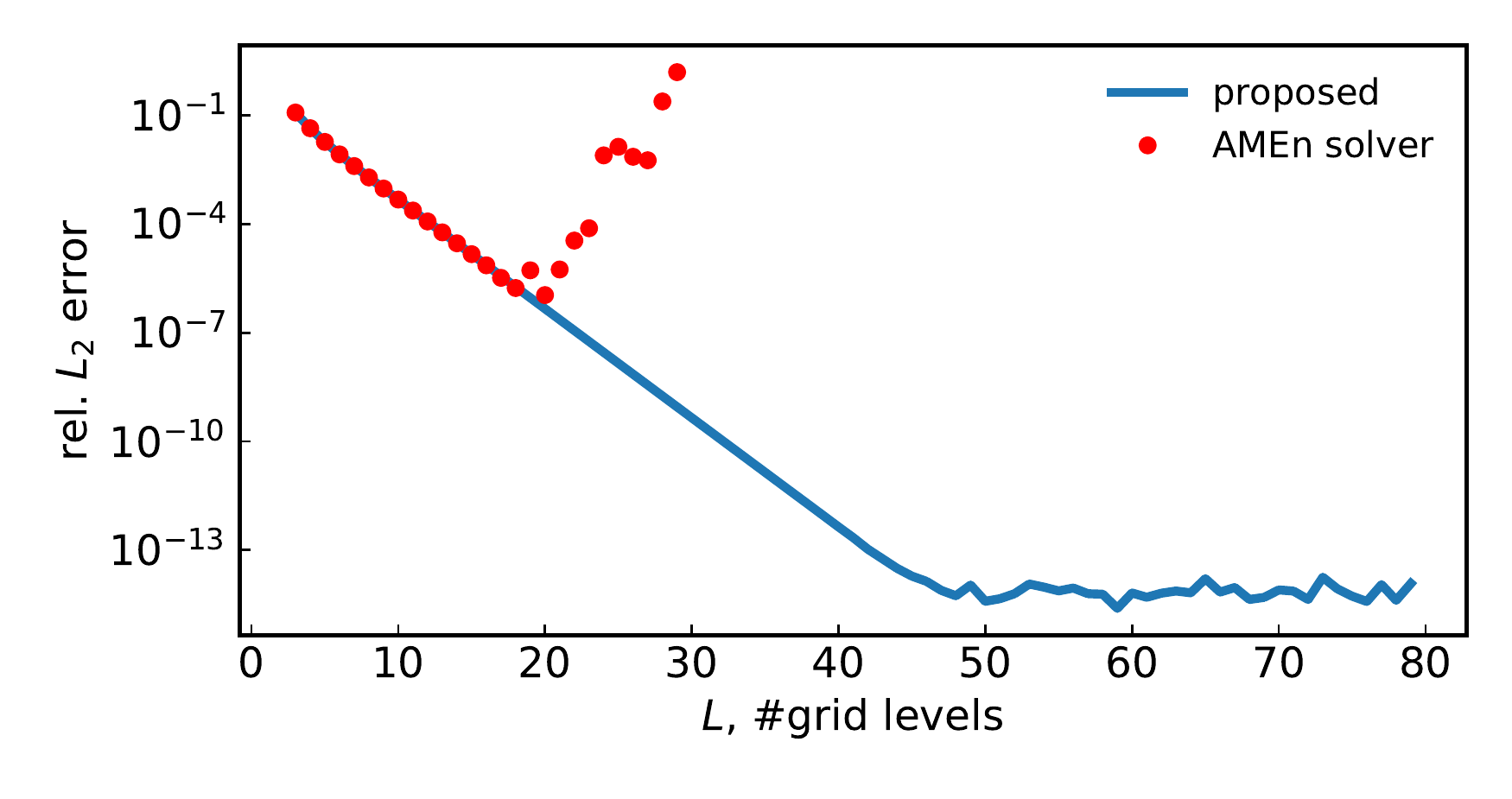}
    \caption{Relative $\mathrm{L}_2$-errors against the number of grid levels $\LL$ (the total number of grid points is $2^\LL$) for the solutions obtained by solving $A_h u_h = f_h$ using the AMEn solver and by directly computing $u_h = A_h^{-1}f_h$ as a QTT matrix-by-vector product using the proposed formulas for $A_h^{-1}$.}
    \label{fig:numerical}
\end{figure}

\begin{remark}
To construct the cores of the QTT decomposition of $A_h^{-1}$, we need to compute the numbers of the form $z^M$ for $z = 1 - \gamma_1 h + \gamma_2 h^2 + O(h^3)$.
For large values of $M$ (e.g. $M = 2^\LL$) and small values of $h$ (e.g., $h \approx \sqrt{\varepsilon_{\mathrm{machine}}}$), the direct computation of $z^M$ gives rise to the error of the order $\sqrt{\varepsilon_{\mathrm{machine}}}$.
It happens as the $\gamma_2h^2$ term is ``lost'' during the computation of $z$, whereas the following shows that it has $\mathcal{O}(h)$ impact on $z^{2^L}$:
\begin{align*}
z^{2^L} &= z^{\frac{1}{h}} = \exp\left(\frac{1}{h}\ln(1 - \gamma_1 h + \gamma_2 h^2 + O(h^3))\right) = \\ &= \exp\left(\frac{1}{h}(-\gamma_1 h + (\gamma_2^2-\gamma_1^2)h^2 + O(h^3))\right)
= \exp(-\gamma_1 + (\gamma_2^2 - \gamma_1^2)h + O(h^2)).
\end{align*}
To retain accuracy of order $\varepsilon_{\mathrm{machine}}$, we have used the above expansion instead of the naive computation of~$z^M$.
\end{remark}

\subsection{Three-dimensional screened Poisson equation}

For $x = (x_1,x_2,x_3) \in \mathbb{R}^3$, let us denote $|x|= \sqrt{x_1^2+x_2^2 + x_3^3}$. 
We consider a three-dimensional screened Poisson equation:
\begin{equation}\label{eq:screened3d}
    -\Delta u + u = 2|x|^{-1} e^{-|x|}, \quad x  \in \Omega = (-a,a)^3,
\end{equation}
with periodic boundary conditions on all opposite faces of the cube $\Omega$. 
It can be straightforwardly verified that $u(x)= e^{-|x|}$ satisfies~\eqref{eq:screened3d}.
To ensure that it also satisfies boundary conditions with high precision, we select $a = 40$, which implies that both values and gradients of $u(x)$ on $\partial \Omega$  are zeroes up to machine epsilon: $e^{-40} \approx 4.2 \cdot 10^{-18}$. 

We discretize the equation using finite difference method on a uniform $2^L \times 2^L \times 2^L$ grid, which leads to a linear system with a matrix of the form
\begin{equation}\label{eq:screened3d_mat}
    A = A_S \otimes I \otimes I + I \otimes A_S \otimes I + I \otimes I \otimes A_S + h^2 I \otimes I \otimes I, \quad A_S = \mathsf{circ}(2, -1, 0, \dots, -1),
\end{equation}
where $h = 2a/2^{L}$. The right-hand side is assembled using exponential sums as described in~\cite{rakhuba2021robust}. To robustly solve equation with the matrix~\eqref{eq:screened3d_mat}, we utilize the idea from~\cite{rakhuba2021robust} and apply the tensor version of the alternating direction implicit (ADI) method. 
This method is based on explicit inversions of shifted discretized one-dimensional operators in the QTT format.
In our case, to run the ADI method, we need to have access to the explicit QTT representation of matrices of the form $(A_S + \shift I)^{-1}$, $\shift>0$, which we have already derived in Section~\ref{sec:circ_1dode}.
The explicit inversions are then used to construct the ADI transition operator~\cite{rakhuba2021robust} in the combined Tucker and QTT (TQTT) format, which is more efficient for three-dimensional problems compared with the original QTT format.
The code for the TQTT-ADI method with the proposed explicit formulas is available at~\url{https://bitbucket.org/rakhuba/qttcirc}.

\begin{figure}
    \centering
    \includegraphics[width=0.49\textwidth]{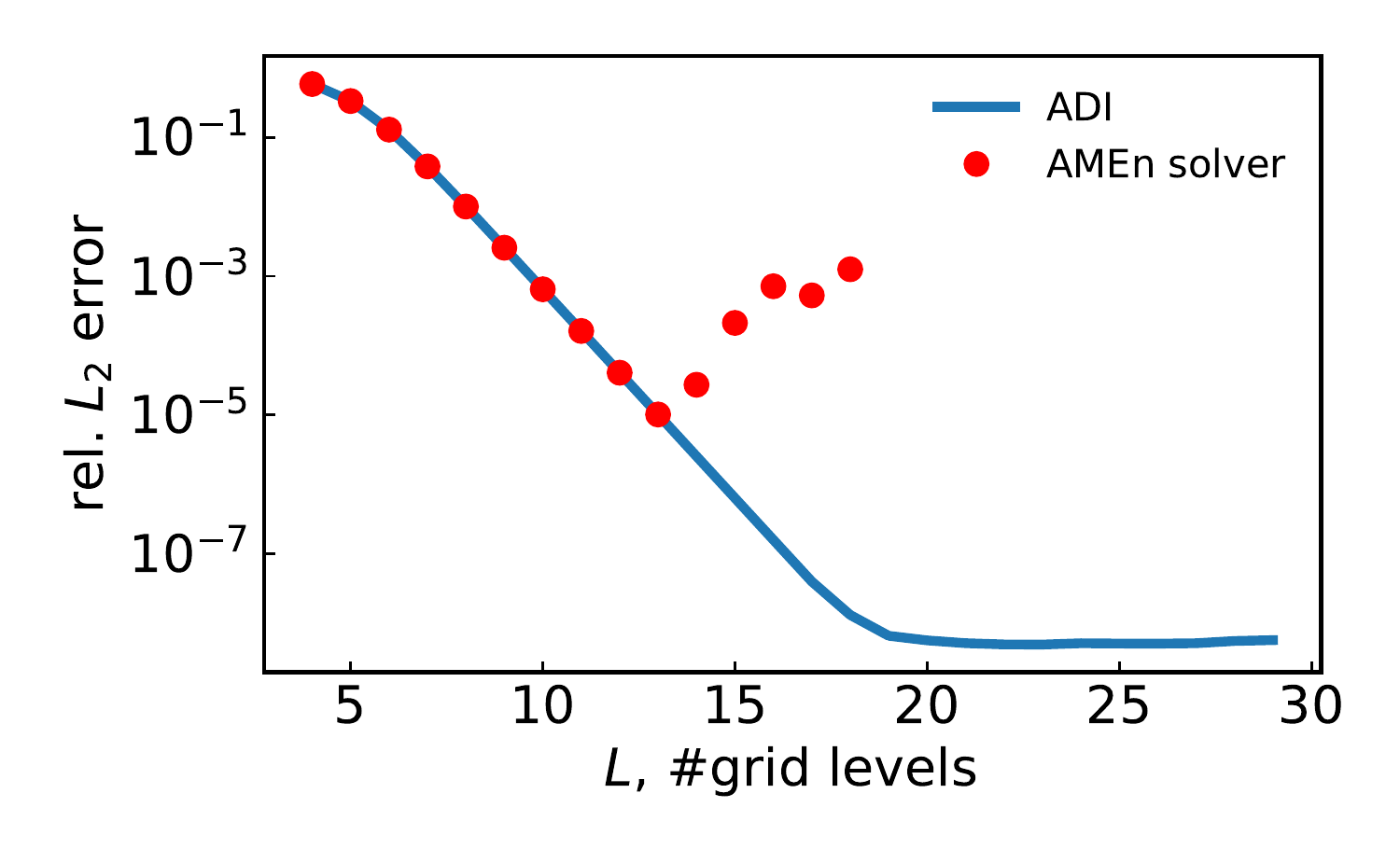}
    \hfill
    \includegraphics[width=0.49\textwidth]{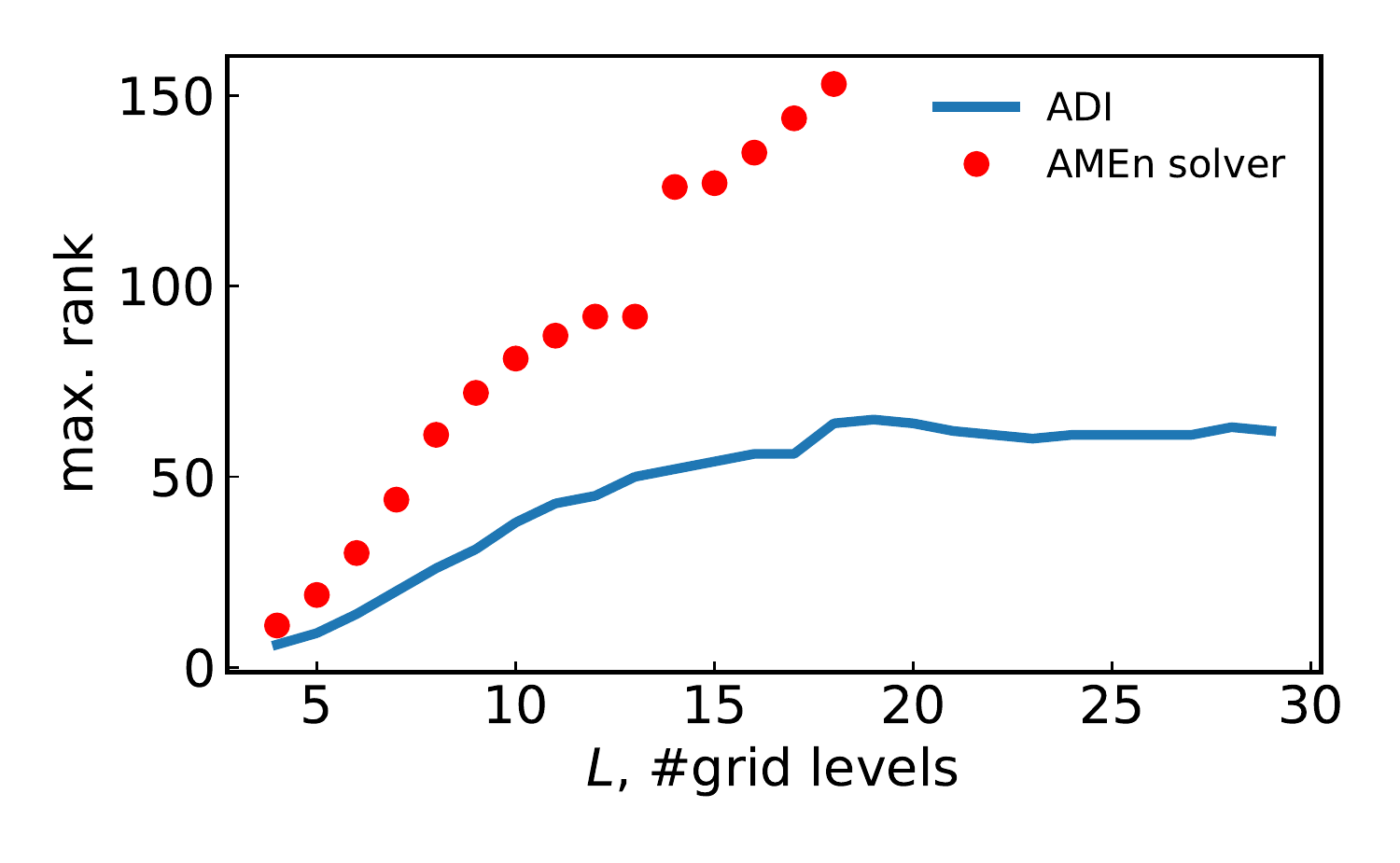}
    \caption{Relative $\mathrm{L}_2$-errors (left) and maximum ranks of the solution (right) against the number of grid levels $\LL$ (the total number of grid points is $2^{3\LL}$) to solve the discretized equation~\eqref{eq:screened3d} with periodic boundary conditions. ADI stands for the solver~\cite{rakhuba2021robust} in the TQTT format, combined with the proposed circulant inversion formulas. The AMEn solver is performed in the QTT format. For both solvers the rank truncation parameter $\varepsilon = 10^{-9}$ is utilized.}
    \label{fig:3d_errs}
\end{figure}
In Figure~\ref{fig:3d_errs}, we present $\mathrm{L}_2$-errors with respect to $e^{-|x|}$ and maximum ranks for the TQTT-ADI method (combined with the proposed circulant inversion formulas) and the AMEn solver, applied to~\eqref{eq:screened3d_mat} in the QTT format.
Similarly to the one-dimensional case, the errors from the AMEn solver start increasing after a certain number of grid levels.
At the same time, the proposed approach is capable of maintaining the desired accuracy level for large $L$.
The plot with maximum rank values also shows that the ranks using the ADI method stabilize, while for the AMEn solver they start increasing in the region of instabilities.
We also note that the AMEn solver is available only for the QTT format. 
This explains why the rank values are larger even in the region with no stability issues.
The fact that QTT ranks are larger than those in the TQTT format was also observed in~\cite{marcati2019tensor,rakhuba2021robust}.

In Figure~\ref{fig:3d_times}, we present computation times of the TQTT-ADI method for different rank truncation parameters $\varepsilon$.
The figure suggests that for $L=25$ we are able to solve the system within a minute of computation time for all considered $\varepsilon$.
At the same time, in the given range of $L$ it was not even possible to run methods that require storing full tensors: for $L=10, 25$ storing a single tensor of the size $2^{L}\times 2^L \times 2^L$ would require $\sim8$ Gb and $3\cdot 10^{14}$ Gb respectively.

\begin{figure}
    \centering
    \includegraphics[width=0.65\textwidth]{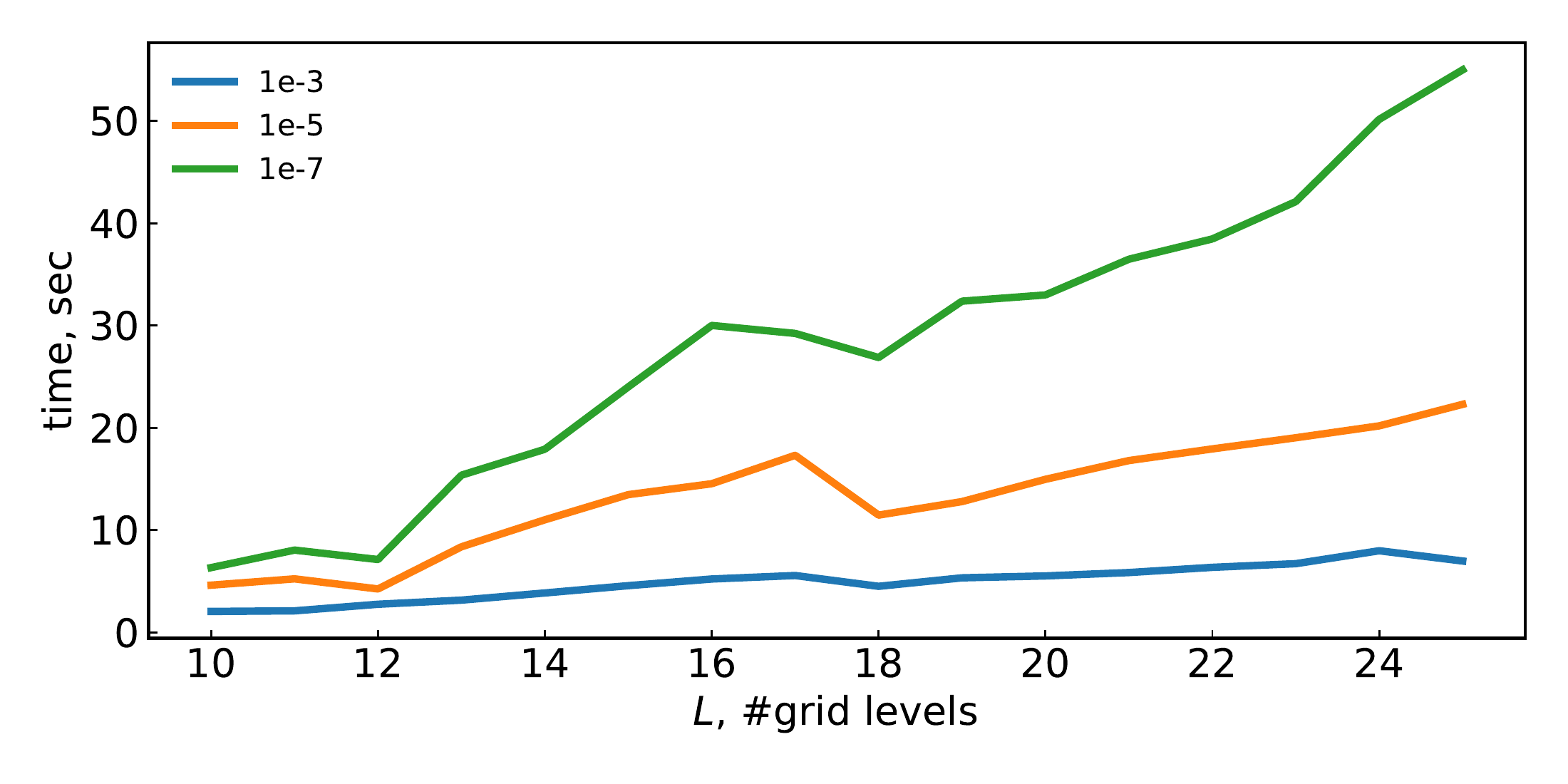}
    \caption{Computation times of the ADI method in the TQTT format with the proposed circulant inversion formulas against the number of grid levels $\LL$ (the total number of grid points is $2^{3\LL}$) to solve the discretized equation~\eqref{eq:screened3d} with periodic boundary conditions.
    Different lines correspond to different  rank truncation parameters $\varepsilon$.}
    \label{fig:3d_times}
\end{figure}

\section*{Acknowledgments}
This work is supported by Russian Science Foundation grant \textnumero\;21-71-00119.

\bibliographystyle{plain}
%\bibliography{circ,tensor.bib,our.bib,iter.bib}

\appendix

\section{Proofs of Section~\ref{sec:circ}} \label{app:circ}

\begin{lemma}\label{lm:sum-shift}
	For any function $f(j):\{0,\dots,N-1\}\to \mathbb{C}$ and integers  $s,t$ the following holds:
	\[
	\sum_{j=s}^{s+N-1} f(j \bmod N) = \sum_{j=t}^{t+N-1} f(j \bmod N).
	\]
	\end{lemma}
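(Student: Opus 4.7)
The plan is to reduce both sides to the ``canonical'' sum $\sum_{k=0}^{N-1} f(k)$, exploiting the fact that any block of $N$ consecutive integers hits each residue class modulo $N$ exactly once.

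First, I would prove the auxiliary claim that for every integer $s$, the map $\varphi_s : \{s, s+1, \dots, s+N-1\} \to \{0, 1, \dots, N-1\}$ defined by $\varphi_s(j) = j \bmod N$ is a bijection. Injectivity is the key point: if $j_1, j_2 \in \{s, \dots, s+N-1\}$ with $j_1 \equiv j_2 \pmod{N}$, then $N \mid (j_1 - j_2)$, but $|j_1 - j_2| \le N-1 < N$, forcing $j_1 = j_2$. Since the domain and codomain both have cardinality $N$, injectivity gives bijectivity.

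Next, I would use this bijection to rewrite the sum by the change-of-variables $k = j \bmod N$:
\[
\sum_{j=s}^{s+N-1} f(j \bmod N) = \sum_{k=0}^{N-1} f(k).
\]
The right-hand side is independent of $s$, so applying the same identity with $s$ replaced by $t$ gives the desired equality.

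I do not expect any real obstacle here; the only item worth writing carefully is the injectivity argument, since everything else is a relabelling of indices. An alternative telescoping proof (showing the sum is unchanged when $s$ is replaced by $s+1$, because the summands for $j=s$ and $j=s+N$ agree modulo $N$) would work equally well, but the bijection approach seems cleaner and avoids an induction.
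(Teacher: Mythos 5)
Your proof is correct and follows essentially the same route as the paper: both reduce each side to $\sum_{k=0}^{N-1} f(k)$ by observing that $j \mapsto j \bmod N$ maps a block of $N$ consecutive integers bijectively onto $\{0,\dots,N-1\}$ (the paper simply states this permutation fact as obvious, while you supply the injectivity argument). No gaps.
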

\begin{proof}[Proof of Lemma~\ref{lm:sum-shift}]
For any integer $s$ the sequence 
	\[
	\Big(s \bmod N, \dots, (s+N-1) \bmod N\Big)
	\]
	is obviously a permutation of $(0,\dots, N-1)$, thus
	\[
	\sum_{j=s}^{s+N-1} f(j \bmod N) = \sum_{j=0}^{N-1} f(j) = \sum_{j=t}^{t+N-1} f(j \bmod N).
	\]
	\end{proof}

	\begin{proof}[Proof of Lemma~\ref{lm:equivalent}]
	We start from~\eqref{eq:inverse} and substitute the summation index with $j' = j - \ell$:
	\[
    \sum_{j'=-\ell}^{N-1-\ell} A_{((k-\ell)- j') \bmod N, 0}~b_{j'\bmod N}
	=
	\delta_{k,\ell},\quad k,\ell \in \{0, \dots, N-1\}.
	\]
	We can apply Lemma~\ref{lm:sum-shift} to the left part of the equality as the summed expression is indeed a function of $j' \bmod N$.
    Therefore, we obtain:
	\[
	\sum_{j'=0}^{N-1} A_{((k-\ell)- j') \bmod N, 0}~b_{j'}
	=
	\delta_{k,\ell},\quad k,\ell \in \{0, \dots, N-1\}.
	\]
	The left part of the equality depends only on $(k-\ell) \bmod N$ and thus instead of $N^2$ equations we can equivalently write only $N$:
	\[
	\sum_{j=0}^{N-1} A_{(k-j) \bmod N, 0}~b_j
	=
	\delta_{k,0},~ 0\le k \le N-1,\quad k,\ell \in \{0, \dots, N-1\}.
	\]
	If we substitute the summation index with $j'' = k-j$ and take into account that $x_j = \xi_{j}$ for $0\le j \le N-1$, we obtain
	\[
	\sum_{j''=k-m-n+1}^{k} A_{j'' \bmod N, 0}~\xi_{k-j''} = \delta_{k,0}.
	\]
	We again apply Lemma~\ref{lm:sum-shift} (the biinfinite vector $\xi$  is $N$-periodic and thus depends only on $j'' \bmod N$):
	\[
	\sum_{j''=-n}^{m-1}A_{j'' \bmod N, 0}~\xi_{k-j''} = \delta_{k,0}.
	\]
	It is obvious that for $-n \le j'' \le m-1$ it holds that $A_{j'' \bmod N,0} = \Ainf_{j'', 0}$.
	Moreover, $\Ainf_{j'', 0} = 0$ for $j'' \not\in [-n,m-1]$, so we obtain
	\[
	\sum_{j''=-\infty}^{\infty}\Ainf_{j'',0}~\xi_{k-j''} = \delta_{k,0}.
	\]
	Change the summation index back to $j = k-j''$:
	\[
	\sum_{j=-\infty}^{\infty}\Ainf_{k-j,0}~\xi_{j}
	=
	\sum_{j=-\infty}^{\infty}\Ainf_{k,j}~\xi_{j}
	=
	\delta_{k,0},
	,~ 0\le k \le N-1.
	\]
	Now take any $k' \in \mathbb{Z}$ and represent it as $k' = k + Nq$, where $q$ and $k$ are integers such that $0 \le k \le N-1$.
	By changing the summation index to $j' = j - Nq$ we can write:
	\[
	\sum_{j=-\infty}^{\infty}\Ainf_{k',j}\xi_{j}
    =
    \sum_{j'=-\infty}^{\infty}\Ainf_{k',j'+Nq}\xi_{j'}
    =
    \sum_{j'=-\infty}^{\infty}\Ainf_{k,j'}\xi_{j'}
    =
    \delta_{k,0}.
    \]
    Therefore, the system of equations~\eqref{eq:inverse} is equivalent to the infinite system of equations
    \[
    \sum_{j=-\infty}^{\infty}\Ainf_{k,j}\xi_{j} = \delta_{k\bmod N,0},~k\in\mathbb{Z},
    \]
    which is the same as $\Ainf\xi = \beta$.
	\end{proof}

	\begin{proof}[Proof of Lemma~\ref{lm:unnamed}]
	Let us compute an element of matrix $\Ainf \Binf$:
	\begin{align*}
	\sum_{j=-\infty}^{\infty}\Ainf_{k, j}\Binf_{j, \ell}
	&=
	\sum_{j=-\infty}^{\infty}\Ainf_{k-j, 0}\Binf_{j, \ell}
	=
	\sum_{j'=-n}^{m-1}a_{j'}\Binf_{k-j', \ell}
	=\\&=
	\frac{1}{2\pi i}\oint_U\frac{1}{g(z)}\sum_{j'=-n}^{m-1}a_{j'}z^{l-k+j'-1}dz
	=
	\frac{1}{2\pi i}\oint_U\frac{g(z)}{g(z)}z^{l-k-1}dz.
	\end{align*}
	But for any integer $q$ it holds:
	\[
	\oint_U z^{q}dz
	=
	\begin{cases}
	2\pi i,&\text{if } q = -1,\\
	0,&\text{otherwise}.
	\end{cases}
	\]
	Therefore, we finally obtain
	\[
	\sum_{j=-\infty}^{\infty}\Ainf_{k, j}\Binf_{j, \ell} = \delta_{k,\ell} ~\Longleftrightarrow~\Ainf \Binf = I^{(\infty)}.
	\]
	\end{proof}
	
\section{Proofs of Section~\ref{sec:qtt_repr}} \label{app:qtt_repr}

\begin{proof}[Proof of Proposition~\ref{prop:sum_z}]
First, we represent the circulant using the powers of permutation matrix $\perm$:
\[
    \invA_\LL = \sum_{i_1,\dots,i_\LL=0}^{1,\dots,1} \sum_{t=1}^r \alpha_t z_t^{\left(2^{\LL-1}i_{\LL} + \dots + 2^1 i_2 + i_1\right)} \perm_\LL^{\,\overline{i_1\dots, i_\LL}}.
\]
Then use the result of Lemma~\ref{lm:perm} for $\perm_\LL^{\,\overline{i_1\dots, i_\LL}}$ and the polylinearity of the TT decomposition:
\[
\begin{split}
    \invA_\LL 
    &= \sum_{i_1,\dots,i_\LL=0}^{1,\dots,1}\ \sum_{t=1}^r
    \alpha_t
    z_t^{\left(2^{\LL-1}i_{\LL} + \dots + 2^1 i_2 + i_1\right)} \ U_{i_\LL} \Join V_{i_{\LL-1}} \Join \dots \Join V_{i_{2}} \Join W_{i_1}  
    = \\
    &=
    \sum_{t=1}^r \left( \sum_{i_\LL=0}^1 z_t^{2^{\LL-1}i_{\LL}} U_{i_\LL}  \right)
    \Join 
    \dots
    \Join 
    \left( \alpha_t \sum_{i_1=0}^1 z_t^{i_{1}} W_{i_1} \right) = \\
    &=
    \sum_{t=1}^r \left(U_0 + z_t^{2^{\LL-1}} U_1 \right) \Join \left(V_0 + z_t^{2^{\LL-2}} V_1 \right) \Join \dots \Join \left(V_0 + z_t^{2^{1}} V_1 \right) \Join \left(\alpha_t \left(W_0 + z_t W_1 \right) \right) \\
    &=
    {\widetilde Q}_1 \Join {\widetilde Q}_2 \Join \dots \Join {\widetilde Q}_d,
\end{split}
\]
where thanks to Lemma~\ref{lm:sumtt}, the cores ${\widetilde Q}_k$ are block matrices of the block size $2r \times 2r$ for $k=2,\dots,\LL-1$, $1\times 2r$ for $k=1$ and $2r\times 1$ for $k=\LL$, i.e., it is an explicit QTT representation with all ranks equal to $2r$.
In the next steps, our goal is to find linear dependencies and reduce the rank values to $(2, r+1, \dots, r+1)$.
For the ease of presentation, we provide the proof for $r=2$. The generalization to $r>2$ is straightforward.
We have
\[
\begin{split}
    {\widetilde Q}_1
    &= 
    \begin{bmatrix}
        I + q_{1,1} H & H + q_{1,1} I & I + q_{2,1} H & H + q_{2,1} I
    \end{bmatrix}
    \\
    &= 
    \begin{bmatrix}
        I & H
    \end{bmatrix}
    \Join
    \begin{bmatrix}
        1 & q_{1,1} & 1 & q_{2,1} \\
        q_{1,1} & 1 & q_{2,1} & 1
    \end{bmatrix}
\end{split}
\]
Next, by denoting $Q_1 = \begin{bmatrix} I & H \end{bmatrix}$, we have
\[
    {\widetilde Q}_1 \Join {\widetilde Q}_2 = \left(Q_1 \Join  
    \begin{bmatrix}
        1 & q_{1,1} & 1 & q_{2,1} \\
        q_{1,1} & 1 & q_{2,1} & 1
    \end{bmatrix}  \right)
    \Join
    {\widetilde Q}_2
    =
    Q_1 \Join  
    \left(
    \begin{bmatrix}
        1 & q_{1,1} & 1 & q_{2,1} \\
        q_{1,1} & 1 & q_{2,1} & 1
    \end{bmatrix}
    \Join
    {\widetilde Q}_2
    \right),
\]
where
\begin{equation}\label{eq:proof:Q}
    {\widetilde Q}_k = 
    \begin{bmatrix}
        I + q_{1,k} J' & J' \\
        q_{1,k} J & J + q_{1,k} I \\
        & & I + q_{2,k} J' & J' \\
        & & q_{2,k} J & J + q_{2,k} I
    \end{bmatrix}, 
    \quad  
    k = 2,\dots,L-1.
\end{equation}
Using the fact that $q_{t,1} = q_{t,2}^2$ for all $t$, we get:
\[
\begin{split}
    &\begin{bmatrix}
        1 & q_{1,1} & 1 & q_{2,1} \\
        q_{1,1} & 1 & q_{2,1} & 1
    \end{bmatrix}
    \Join
    {\widetilde Q}_2
     \\
    &=\begin{bmatrix}
        I + q_{1,2} J' + q_{1,2}^3 J & J' + q_{1,2}^2J + q_{1,2}^3 I & I + q_{2,2} J' + q_{2,2}^3 J & J' + q_{2,2}^2J + q_{2,2}^3 I \\
        q_{1,2}^2 I + q_{1,2}^3 J' + q_{1,2} J & q_{1,2}^2J'+ J + q_{1,2} I & q_{2,2}^2 I + q_{2,2}^3 J' + q_{2,2} J & q_{2,2}^2J'+ J + q_{2,2} I
    \end{bmatrix} 
    \\
    &= 
    \begin{bmatrix}
        I & J' + q_{1,2}^2 J &  J' + q_{2,2}^2 J \\
        O & q_{1,2}I + q_{1,2}^2 J' + J & q_{2,2}I + q_{2,2}^2 J' + J
    \end{bmatrix}
    \Join 
    \begin{bmatrix}
        1 & q_{1,2}^3 & 1 & q_{2,2}^3\\
        q_{1,2}& 1 & 0 & 0 \\
        0 & 0 & q_{2,2} & 1 
    \end{bmatrix}.
\end{split} 
\]
In the last equation we obtained a factorization into a product of $2\times 3$ block matrix, which we denote as $Q_2$, times a $3\times 4$ matrix, which we propagate further to ${\widetilde Q}_3$ (see \eqref{eq:proof:Q} for ${\widetilde Q}_3$):
\[
\begin{split}
    &\begin{bmatrix}
        1 & q_{1,2}^3 & 1 & q_{2,2}^3 \\
        q_{1,2}& 1 & 0 & 0 \\
        0 & 0 & q_{2,2} & 1 
    \end{bmatrix}
    \Join
    {\widetilde Q}_3 
    =
    \begin{bmatrix}
        1 & q_{1,3}^6 & 1 & q_{2,3}^6\\
        q_{1,3}^2& 1 & 0 & 0 \\
        0 & 0 & q_{2,3}^2 & 1 
    \end{bmatrix}
    \Join
    \begin{bmatrix}
        I + q_{1,3} J' & J' \\
        q_{1,3} J & J + q_{1,3} I \\
        & & I + q_{2,3} J' & J' \\
        & & q_{2,3} J & J + q_{2,3} I
    \end{bmatrix}
    \\
    & = 
    \begin{bmatrix}
        I + q_{1,3} J' + q_{1,3}^7 J & J' + q_{1,3}^6 J + q_{1,3}^7 I & I + q_{2,3} J' + q_{2,3}^7 J & J' + q_{2,3}^6 J + q_{2,3}^7 I \\
        q_{1,3}^2 I + q_{1,3}^3 J' + q_{1,3} J  & q_{1,3}^2 J' + J + q_{1,3} I & & \\
        & & q_{2,3}^2 I + q_{2,3}^3 J' + q_{2,3} J  & q_{2,3}^2 J' + J + q_{2,3} I 
    \end{bmatrix}
    \\
    & = 
    \begin{bmatrix}
        I & J' + q_{1,3}^6 J & J' + q_{2,3}^6 J \\
        & q_{1,3} I + q_{1,3}^2 J' + J \\
        & & q_{2,3} I + q_{2,3}^2 J' + J
    \end{bmatrix}
    \Join
    \begin{bmatrix}
        1 & q_{1,3}^7 & 1 & q_{2,3}^7 \\
        q_{1,3} & 1 & 0 & 0 \\
        0 & 0 & q_{2,3} & 1
    \end{bmatrix}
    \equiv 
Q_3 \Join
    \begin{bmatrix}
        1 & q_{1,3}^7 & 1 & q_{2,3}^7 \\
        q_{1,3} & 1 & 0 & 0 \\
        0 & 0 & q_{2,3} & 1
    \end{bmatrix}.
\end{split} 
\]
By propagating it further, we obtain the following recurrence:
\[
    \begin{bmatrix}
        1 & q_{1,k-1}^{2^{k-1}-1} & 1 & q_{2,k-1}^{2^{k-1}-1} \\
        q_{1,k-1}& 1 & 0 & 0 \\
        0 & 0 & q_{2,k-1} & 1 
    \end{bmatrix}
    \Join
    {\widetilde Q}_k
    =
    Q_k \Join 
    \begin{bmatrix}
        1 & q_{1,k}^{2^{k}-1} & 1 & q_{2,k}^{2^{k}-1} \\
        q_{1,k}& 1 & 0 & 0 \\
        0 & 0 & q_{2,k} & 1 
    \end{bmatrix},
\]
where 
\[
    Q_k = 
    \begin{bmatrix}
        I & J' + q_{1,k}^{2^{k}-2} J & J' + q_{2,k}^{2^{k}-2} J \\
        & q_{1,k} I + q_{1,k}^2 J' + J \\
        & & q_{2,k} I + q_{2,k}^2 J' + J
    \end{bmatrix}.
\]
For the last core we obtain:
\[
\begin{split}
Q_\LL &=
    \begin{bmatrix}
        1 & q_{1,\LL-1}^{2^{\LL-1}-1} & 1 & q_{2,\LL-1}^{2^{\LL-1}-1} \\
        q_{1,\LL-1}& 1 & 0 & 0 \\
        0 & 0 & q_{2,\LL-1} & 1 
    \end{bmatrix}
    \Join
    {\widetilde Q}_\LL
    = 
    \begin{bmatrix}
        1 & q_{1,\LL}^{2^{\LL}-2} & 1 & q_{2,\LL}^{2^{\LL}-2} \\
        q_{1,\LL}^2 & 1 & 0 & 0 \\
        0 & 0 & q_{2,\LL}^2 & 1
    \end{bmatrix}
    \Join
    \begin{bmatrix}
        \alpha_1 \left(I + q_{1,\LL}J' \right) \\
        \alpha_1 q_{1,\LL} J \\
        \alpha_2 \left(I + q_{2,\LL}J' \right) \\
        \alpha_2 q_{2,\LL} J 
    \end{bmatrix}
    \\
    &= 
    \begin{bmatrix}
        (\alpha_1 + \alpha_2) I + (\alpha_1 q_{1,\LL} + \alpha_2 q_{2,\LL}) J' + (\alpha_1 q_{1,\LL}^{2^{\LL}-1}+ \alpha_2 q_{1,\LL}^{2^{\LL}-1}) J \\
        \alpha_1 q_{1,\LL} \left(J + q_{2,\LL}I + q_{1,\LL}^2 J' \right)\\
        \alpha_2 q_{2,\LL} \left(J + q_{2,\LL}I + q_{2,\LL}^2 J' \right)
    \end{bmatrix}
\end{split}
\]
\end{proof}

\section{Proofs of Section~\ref{sec:examples}} \label{app:examples}

	\begin{proof}[Proof of Proposition~\ref{prop:laur_prod}]
	Let us compute the first column of the circulant $C = AB$.
	\begin{align*}
	    C_{s,0}
	    &=
	    \sum_{t=0}^{N-1} A_{s,t}B_{t,0}
	    =
	    \sum_{t=0}^{N-1} A_{s-t \bmod N,0}~B_{t,0}
	    = \\ &=
	    \sum_{t=0}^{m_B-1} A_{s-t \bmod N,0}~B_{t,0}
	    +
	    \sum_{t=-n_B}^{-1} A_{s-(N+t) \bmod N,0}~B_{N+t,0}.
	\end{align*}
	If we denote $a_t$ and $b_t$ the coefficients of Laurent polynomials $f_A(z)$ and $f_B(z)$ respectively, we can simplify the above expression:
	\[
	C_{s, 0} = \sum_{t=-n_B}^{m_B-1} A_{s-t \bmod N,0}~b_t.
	\]
	
	Now we split the set of row indices $s$ into three parts:
	\begin{align*}
	S_1 &= \{0, \dots, m_A+m_B-2\},\\
	S_2 &= \{m_A+m_B-1, \dots, N-n_A-n_B-1\}, \\
	S_3 &=\{N-n_A-n_B,\dots,N-1\}.    
	\end{align*}
	If $s \in S_1$, then $(s-t) \in [-m_A, m_A+m_B-2+n_B]$ and we can write $A_{s-t \bmod N, 0} = a_{s-t}$ (here we imply that $a_i = 0$ for $i \not\in [-n_A, m_B-1]$).
	Next, if $s \in S_2$, then $(s-t) \in [m_A, N-n_A-1]$, thus $A_{s-t \bmod N,0} = 0$.
	Finally, if $s \in S_3$, then $(s-t) \in [N-n_A-n_B-m_B+1, N-1+n_B]$ and $A_{s-t \bmod N, 0} = a_{s-t-N}$.
	Taking the three cases together and using the notation $m = m_A + m_B$, $n=n_A + n_B$, we can write
	\begin{equation}\label{eq:C-s}
	C_{s,0} = \begin{cases}
	    \sum_{t=-n_B}^{m_B-1} a_{s-t}b_t,
	    &\text{ if } s \in [0, m-1],\\
	    0, &\text{ if } s \in [m, N-n-1], \\
	    \sum_{t=-n_B}^{m_B-1} a_{s-t-N}b_t, & \text{ if } s \in [N-n, N-1].
	\end{cases}
	\end{equation}
	So the circulant $C$ is indeed of the form~\eqref{eq:A} with parameters $m$ and $n$ (the properties $c_{m-1} \neq 0$ and $c_{-n} \neq 0$ will be checked a bit later).
	Moreover, as for $s \in [-n,-1]$ we have by definition
	$c_s = C_{N+s, 0}$, from~\eqref{eq:C-s} it follows that the general formula for $c_s$ is
	\[
	c_s = \sum_{t=-n_B}^{m_B-1} a_{s-t}b_t~~\text{ for all } s \in \{-n, \dots, m-1\}.
	\]
	This is exactly the formula for product of Laurent polynomials $f_A(z)$ and $f_B(z)$.
	
	Finally, we need to prove that $c_{m-1}$ and $c_{-n}$ are non-zero.
	By~\eqref{eq:C-s}, $c_{m-1} = a_{m_A-1}b_{m_B-1} \neq 0$ as $a_{m_A-1}$ and $b_{m_B-1}$ are non-zero.
	Similarly, $c_{-n} = a_{-n_A}b_{-n_B} \neq 0$.
	\end{proof}

\end{document}